\crefname{equation}{}{}
\crefname{figure}{Figure}{Figures}
\crefname{assumption}{Assumption}{Assumptions}
\crefname{condition}{Condition}{Conditions}
\crefname{property}{Property}{Properties}
\newcommand{\cmark}{\ding{51}}%
\newcommand{\xmark}{\ding{55}}%
\crefname{equation}{}{}
\crefname{figure}{Figure}{Figures}
\crefname{assumption}{Assumption}{Assumptions}
\crefname{condition}{Condition}{Conditions}
\crefname{property}{Property}{Properties}
\newtheorem*{theorem*}{Theorem}
\newtheorem{theorem}{Theorem}
\newtheorem{definition}{Definition}
\newtheorem{lemma}{Lemma}
\newtheorem*{lemma*}{Lemma}
\newtheorem{assumption}{Assumption}
\newtheorem{corollary}{Corollary}
\newtheorem*{corollary*}{Corollary}
\newtheorem{condition}{Condition}
\newtheorem{remark}{Remark}
\newtheorem{property}{Property}
\newcommand{\op}[2]{\overset{#1}{#2}}
\newcommand{\He}{\nabla^2 f}
\newcommand{\F}{\mathcal{F}_{0}}
\newcommand{\T}{\mathrm{T}}
\newcommand{\TM}{\mathrm{T}_{\max}}
\newcommand{\f}[1]{\mathbf{#1}}
\newcommand{\lr}[1]{\left\langle #1 \right\rangle}
\newcommand{\krylov}[3]{\mathcal{K}_{#3}(\f{#1}, \f{#2})}
\newcommand{\Rdd}{\mathbb{R}^{d \times d}}
\newcommand{\Rd}{\mathbb{R}^{d}}
\newcommand{\zero}{\f{0}}
\newcommand{\argmin}[1]{\underset{#1}{\text{argmin}}}
\newcommand{\LH}{L_{\mathbf{H}}}
\title{Faithful-Newton Framework: Bridging Inner and Outer Solvers for Enhanced Optimization}
\author{Alexander Lim\footnote{School of Mathematics and Physics, University of Queensland, Australia. Email : alexander.lim@uq.edu.au} \and Fred Roosta\footnote{School of Mathematics and Physics, University of Queensland, Australia. ARC Training Centre for Information Resilience (CIRES), Brisbane, Australia. Email : fred.roosta@uq.edu.au}}
\date{\today}
\begin{document}
	\maketitle
\begin{abstract}
        Newton-type methods enjoy fast local convergence and strong empirical performance, but achieving global guarantees comparable to first-order methods remains challenging. Even for simple strongly convex problems, no straightforward variant of Newton's method matches the global complexity of gradient descent. While more sophisticated variants can improve iteration complexity, they typically require solving difficult subproblems with high per-iteration costs, leading to worse overall complexity. These limitations stem from treating the subproblem as an afterthought--either as a black box, yielding overly complex and impractical formulations, or in isolation, without regard to its role in advancing the optimization of the main objective.

        By tightening the integration between the inner iterations of the subproblem solver and the outer iterations of the optimization algorithm, we introduce simple Newton-type variants, called \textit{Faithful-Newton framework}, which in a sense remain faithful to the overall simplicity of classical Newton’s method by retaining simple linear system subproblems. The key conceptual difference, however, is that the quality of the subproblem solution is directly assessed based on its effectiveness in reducing optimality, which in turn enables desirable convergence complexities across a variety of settings.
        Under standard assumptions, we show that our variants, depending on parameter choices, achieve global superlinear convergence, condition-number-independent linear convergence, and/or local quadratic convergence, even when using inexact Newton steps, for strongly convex problems; and competitive iteration complexity for general convex problems. Numerical experiments further demonstrate that our proposed methods perform competitively compared with several alternative Newton-type approaches.
\end{abstract}

\section{Introduction}
Consider the following unconstrained optimization problem
\begin{align}\label{eq:min_f}
    \min_{\f{x}\in\mathbb{R}^d} f(\f{x}),
\end{align}
where the function $f:\Rd \to \mathbb{R}$ is convex and twice continuously differentiable. 
At the risk of oversimplification, optimization algorithms for solving \cref{eq:min_f} can be broadly categorized into first-order methods, 
which are derived from simple gradient descent (GD) \cite{beck2017first,tian2023recent,ruder2016overview,lin2020accelerated,lan2020first,wright2022optimization,bubeck2014theory}, 
and second-order methods, which originate from classical Newton's method \cite{nocedal2006numerical,nesterov2018lectures,bottou2018optimization,boyd2004convex,bertsekas1997nonlinear,sun2006optimization,li2001modified,li2009truncated,fasano2006truncated}. 
For strongly convex problems, classical Newton’s method constructs each step by minimizing a quadratic model of the objective around the current iterate $\f{x}_k$:
\begin{align}\label{eq:quadratic}
\f{d}_{k} = \argmin{\f{s}\in\Rd} , \tfrac{1}{2}\lr{\f{s},\f{H}_{k}\f{s}} + \lr{\f{g}_{k},\f{s}},
\end{align}
followed by $\f{x}_{k+1} = \f{x}_k + \f{d}_k$, where $\f{H}_k$ and $\f{g}_k$ are the Hessian and gradient at $\f{x}_k$. An appealing aspect of this method is that \cref{eq:quadratic} reduces to solving the linear system $\f{H}_k\f{s} = -\f{g}_k$, often called Newton’s system, for which decades of numerical linear algebra research offer highly effective solution techniques \cite{saaditerative2003,bjorck2015numerical}.

When the initial point $\f{x}_0$ is close to the optimum, classical Newton's method achieves quadratic convergence \cite{nesterov2018lectures,nocedal2006numerical}. However, if initialized far away, it may diverge. To address this, damped Newton's method \cite{nocedal2006numerical,boyd2004convex,nesterov2018lectures} introduces a step-size $\eta$ satisfying a line-search condition, such as Armijo’s rule,  
\begin{align}\label{eq:armijo}
    f(\f{x}_k + \eta\f{d}_k) \leq f(\f{x}_k) + \rho\lr{\f{g}_k,\eta\f{d}_k}, \quad \rho > 0,
\end{align}
which ensures sufficient descent and hence global convergence.  It is well-known that, locally, the unit step-size is always accepted by \cref{eq:armijo}, leading to the Newton direction being naturally scaled, with $\eta = 1$ regarded as the natural step size of Newton's method---a property shared predominantly by second-order methods \cite{smee2025first}.

Damped Newton's method typically outperforms first-order methods in convex settings \cite{xu2020second,xu2016sub,yao2021adahessian}, and its strong empirical performance persists even when steps are approximated, i.e., $\f{d}_k \approx -\f{H}_k^{-1}\f{g}_k$. This observation motivates truncated, or inexact, Newton's methods \cite{dembo1982inexact,dembo1983truncated,dixon1988numerical,nash1991numerical}, where an iterative solver approximates the Newton step, and the residual condition $\|\f{g}_k + \f{H}_k\f{s}\| \leq \omega \|\f{g}_k\|$ for some $0 < \omega < 1$, is used to measure inexactness. Thus, the Newton system can be solved only approximately without significant loss of performance, enabling Newton-type methods to scale to high-dimensional problems \cite{lim2025complexity,xu2020second,yao2023inexact,roosta2022newton,royer2020newton,roosta2019sub}.  

From the brief introduction above, the appealing features of inexact/damped Newton's method include theoretical intuitiveness, simple linear subproblems, scalability to high-dimensional problems, natural scaling of the update direction, and strong empirical performance. Nonetheless, establishing global convergence results that match or outperform simpler first-order alternatives remains challenging. As noted by many \cite{kamzolov2024optami,doikov2024gradient,nesterov2018lectures,boyd2004convex}, the theoretical global convergence rates of the damped Newton's method are typically worse than those of GD. Moreover, to our knowledge, even for simple strongly convex problems, no existing analysis of classical Newton's method matches the global iteration/operation complexity of GD.

\begin{table}[htbp]
    \centering
    \renewcommand{\arraystretch}{1.2}
    \resizebox{\columnwidth}{!}{
    \begin{tabular}{|c|c|c|c|c|}
        \hline
        \multirow{2}{*}{\textbf{Methods}} & \multirow{2}{*}{\shortstack{\textbf{$p$\textsuperscript{th}-Lipschitz} \\ \textbf{Smoothness}}} & \multirow{2}{*}{\shortstack{\textbf{Linear}\\\textbf{Subproblems*}}} & \multirow{2}{*}{\shortstack{\textbf{Inexact} \\\textbf{Subproblems}}} & \multirow{2}{*}{\shortstack{\textbf{Iteration}\\\textbf{Complexity}}} \\
        & & & & \\
        \hline
        \multicolumn{5}{|c|}{\textbf{Strongly Convex}} \\
        \hline
        \multirow{2}{*}{Lower Bound \cite{arjevani2019oracle}} & \multirow{2}{*}{2} & \multicolumn{2}{|c|}{} & $\Omega((\LH R/\mu)^{2/7} $\\
         & & \multicolumn{2}{|c|}{} & $+ \ln\ln(\mu^3(\LH^2\varepsilon)^{-1}))$ \\
        \hline
        Gradient descent & 1 & - & - & $\mathcal{O}(\kappa\ln(\varepsilon^{-1}))$ \\
        NAG \cite{nesterov2018lectures} & 1 & - & - & $\mathcal{O}(\sqrt{\kappa}\ln(\varepsilon^{-1}))$ \\
        Inexact Newton's method \cite{roosta2019sub} & 1 & \color{green}{\cmark} & \color{green}{\cmark} & $\mathcal{O}(\kappa^2\ln(\varepsilon^{-1}))$ \\
        CRN \cite{kamzolov2024optami} & 2 & \color{red}{\xmark} & \color{red}{\xmark} & Superlinear**\\
        UN \cite{hanzely2024newton} & 2 & \color{red}{\xmark} & \color{red}{\xmark} & Superlinear**\\
        \hline
         & \multirow{6}{*}{2} & \multirow{6}{*}{\color{green}{\cmark}} & \multirow{6}{*}{\color{green}{\cmark}} & {Superlinear$^{\dagger}$} \\[-5pt]
         & & & & {(\cref{cor:superlinear})}\\
         \textbf{FNCR-LS} & & & & $\mathcal{O}(\ln(\varepsilon^{-1}))^{\dagger}$ \\[-5pt]
         (\cref{alg:fncr-ls}) & & & & (\cref{cor:superlinear_linear_fncr-ls}) \\
         & & & & $\mathcal{O}(\ln(\ln(\varepsilon^{-1})))^{\dagger\dagger}$ \\[-5pt]
         & & & & (\cref{thm:fncr:local:quadratic}) \\
        \hline
        \multicolumn{5}{|c|}{\textbf{Convex}} \\
        \hline
        Lower Bound \cite{arjevani2019oracle} & 2 & \multicolumn{2}{|c|}{} & $\Omega(\varepsilon^{-2/7})$\\
        \hline
        Inexact Newton's method & \multicolumn{4}{|c|}{Convergence cannot be guaranteed} \\
        \hline
        \textbf{FNCR-reg-LS} & \multirow{2}{*}{2} & \multirow{2}{*}{\color{green}{\cmark}} & \multirow{2}{*}{\color{green}{\cmark}} & {$\mathcal{O}(\varepsilon^{-1/2})$}\\[-5pt]
        (\cref{alg:fncr-ls}) & & & & {(\cref{thm:reg_fncr})}\\
        \hline
        Gradient Reg \cite{doikov2024gradient,mishchenko2023regularized,doikov2024super} & 2 & \color{red}{\xmark} & \color{red}{\xmark} & \multirow{2}{*}{$\mathcal{O}(\varepsilon^{-1/2})$} \\
        CRN \cite{nesterov2006cubic} & 2 & \color{red}{\xmark} & \color{red}{\xmark} & \\
        \hline
        Nesterov's Acc \cite{nesterov2008accelerating} & 2 & \color{red}{\xmark} & \color{red}{\xmark} & \multirow{2}{*}{$\mathcal{O}(\varepsilon^{-1/3})$} \\
        Contracting Proximal \cite{doikov2024gradient,doikov2020contracting} & 2 & \color{red}{\xmark} & \color{green}{\cmark} & \\
        \hline
        Acc Taylor Descent \cite{bubeck2019near} & 2 & \color{red}{\xmark} & \color{red}{\xmark} & \multirow{2}{*}{$\tilde{\mathcal{O}}(\varepsilon^{-2/7})$} \\
        NATMI \cite{kamzolov2020near} & 2 & \color{red}{\xmark} & \color{green}{\cmark} & \\
        \hline
        Optimal Tensor method \cite{kovalev2022first} & 2 & \color{red}{\xmark} & \color{red}{\xmark} & \multirow{3}{*}{$\mathcal{O}(\varepsilon^{-2/7})$} \\
        A-NPE \cite{monteiro2013accelerated} & 2 & \color{red}{\xmark} & \color{green}{\cmark} & \\
        Opt Acc \cite{carmon2022optimal} & 1,2 & \color{red}{\xmark} & \color{green}{\cmark} & \\
        \hline
    \end{tabular}
    }
    \caption{Overview of classical and modern higher-order optimization methods for solving \cref{eq:min_f}, compared against classical first-order methods. Here, $\kappa$ is the condition number, $\varepsilon$ the global suboptimality level (see \cref{def:suboptimality}), and $R \triangleq \|\f{x}_0 - \f{x}^*\|$. The notation $\tilde{\mathcal{O}}$ hides logarithmic factors. (*) Only one linear subproblem is solved per iteration. (**) Superlinear convergence holds for strongly star-convex functions. ($\dagger$) Global convergence. ($\dagger\dagger$) Local convergence for $\|\mathbf{g}_k\| \leq r$ with some $r > 0$ from \cref{eq:r}: depending on parameter choices, either a condition-number–independent linear rate or a quadratic rate is obtained.}
    \label{table:comparison}
\end{table}

To address this conundrum, many sophisticated variants of Newton's method have been developed to achieve state-of-the-art iteration complexity, sometimes matching or even surpassing that of GD and its variants. These include (adaptive) cubic regularized Newton's methods \cite{nesterov2006cubic,cartis2011adaptive,kamzolov2024optami} and gradient-regularized Newton's methods \cite{mishchenko2023regularized,doikov2024super,doikov2024gradient}, as well as accelerated variants such as accelerated cubic regularized Newton's methods \cite{nesterov2008accelerating}, accelerated Taylor descent \cite{bubeck2019near}, and accelerated Newton proximal extragradient methods \cite{monteiro2013accelerated,carmon2022optimal}, along with other notable efforts \cite{gasnikov2019near,hanzely2022damped,hanzely2024newton,kovalev2022first}.  

Unfortunately, these improved convergence rates often come at the cost of significantly increased computational complexity, as the methods involve more intricate iterations with higher per-iteration costs. Many appealing features of inexact Newton's method—such as simplicity, scalability, and empirical effectiveness—are no longer preserved. In essence, the gains in iteration complexity come at the expense of simple linear subproblems and, at times, even inexactness, making these methods challenging to implement, less effective empirically, impractical for high-dimensional problems, and in some cases worse than simple GD in terms of operation complexity\footnote{Operation complexity refers to the total number of function, gradient, and Hessian-vector product evaluations required to achieve a desired suboptimality.}. \cref{table:comparison} contrasts some of these recent developments with classical inexact Newton’s method and several standard first-order methods. In this light, this paper seeks to address the following question:

\begin{quote}
    \textit{Is there a close variant of the inexact Newton’s method with a simple linear subproblem that improves upon---or at least matches---the theoretical convergence of gradient descent for (strongly) convex problems, while still offering superior empirical performance?}
\end{quote}
To that end, we develop the \emph{Faithful-Newton} (FN) framework\footnote{The term ``faithful'' reflects the framework's adherence to the essence of classical Newton's method, e.g., solving one linear subproblem per iteration, maintaining a natural unit step-size, and using the local quadratic model.}. FN is not a method per se, but a framework that allows for different instantiations and is motivated as follows. In the literature, Newton-type methods are often studied as two-level schemes: the outer solver corresponds to the Newton-type method itself, where analyses (e.g., worst-case complexity) assume an inexact Newton step $\f{d}_k$ satisfying a specific inexactness condition has been obtained; the inner solver computes such a step, but is typically studied independently and often overlooked, with little impact on the outer solver's analysis. FN departs from this separation by tightly integrating inner and outer solvers, so that the choice of inner solver directly affects the convergence guarantees of the outer solver. In this work, we instantiate FN with the conjugate residual (CR) method \cite{lim2024conjugate} as the inner solver, leveraging its specific properties to obtain enhanced convergence guarantees.\bigskip

\noindent
\textbf{Contributions.} Our contributions can be summarized as follows: 
\begin{enumerate}
    \item Using conjugate residual (CR) as inner solver, we present an instantiation of the FN framework, called Faithful-Newton-Conjugate-Residual with Line-search (FNCR-LS). In strongly convex settings with Lipschitz continuous Hessians, FNCR-LS achieves, depending on parameter choices, condition-number-independent linear convergence (\cref{cor:superlinear_linear_fncr-ls}), global superlinear convergence (\cref{cor:superlinear}), and/or local quadratic convergence (\cref{thm:fncr:local:quadratic}).

    \item We then extend the analysis of FNCR-LS to general convex settings, which requires regularizing the Hessian. We denote this variant as FNCR-reg-LS and show that it achieves an iteration complexity of $\mathcal{O}(1/\sqrt{\varepsilon})$, matching the best known rate among non-accelerated second-order methods (\cref{thm:reg_fncr}).

    \item Finally, in \cref{sec:num_exp}, we present experiments comparing FN with other second-order algorithms. The results show that both FNCR-LS and FNCR-reg-LS are highly competitive with existing second-order methods.
\end{enumerate}

\vspace{2mm}
We organize the rest of the paper as follows. We end this section by introducing the notations and definitions. Before presenting our method, we provide a brief introduction to our main choice of inner solver, CR, in \cref{sec:cr}. In \cref{sec:fncr_ls}, we introduce FNCR-LS as an instantiation of the FN framework, and analyze it under strongly convex problems (\cref{sec:strongly_convex_functions}). We then extend the analysis to general convex functions using FNCR-reg-LS (\cref{sec:general_convex_functions}). Finally, in \cref{sec:num_exp}, we compare the empirical performance of FNCR-LS and FNCR-reg-LS with other second-order methods.\\

\noindent
\textbf{Notations and Definitions.} 
Throughout the paper, vectors and matrices are denoted by bold lower- and upper-case letters, respectively, e.g., $\f{g}$ and $\f{H}$. Their norms, e.g., $\|\f{g}\|$ and $\|\f{H}\|$, correspond to the vector Euclidean and matrix spectral norms. Scalars are represented by regular letters, e.g., $n$, $L$, $\sigma$, $\alpha$, $\beta$, etc. We denote the optimal function value as $f^* \triangleq f(\f{x}^*) \triangleq \min f(\f{x})$, and the sub-level set for a given initial point $\f{x}_0$ as $\F \triangleq \{\f{x} \, | \, f(\f{x}) \leq f(\f{x}_0)\}$.  

The gradient at $\f{x}$ is denoted by $\f{g}(\f{x}) \triangleq \nabla f(\f{x})$, or simply $\f{g}$. The (regularized) Hessian at $\f{x}$ is denoted by $\f{H}(\f{x})$, or simply $\f{H}$, i.e., $\nabla^2 f(\f{x}) + \sigma \f{I}$ or $\nabla^2 f(\f{x})$, depending on the context specified in the respective sections. The maximum and minimum eigenvalues of a matrix $\f{H}$ are denoted by $\lambda_{\max}$ and $\lambda_{\min}$, respectively, and the condition number is $\kappa \triangleq \lambda_{\max}/\lambda_{\min}$. We write $\f{H} \succ \f{0}$ to indicate that $\f{H}$ is positive definite.  

We often discuss the interaction between the outer solver (i.e., optimization method) and the inner solver (i.e., the CR method). The iteration counters for these solvers are denoted by $k$ and $t$, respectively. For example, $\f{x}_k$, $\f{g}_k$, and $\f{H}_k$ correspond to the outer solver, while $\f{d}_k = \f{s}_k^{(t)}$ indicates that the direction $\f{s}_k^{(t)}$, generated by CR at its $t$-th iteration, is used as the search direction for the $k$-th outer iteration. In most cases, the context allows dropping the outer iteration counter and writing simply $\f{s}_k$, with the superscript reintroduced when necessary. We also denote $\delta(\f{x}) \triangleq f(\f{x}) - f^*$ and, with the outer iteration counter, $\delta_k \triangleq \delta(\f{x}_k)$.  

The Krylov subspace of degree $t \geq 1$ generated by $\f{H}$ and $\f{g}$ is denoted by 
\[
\krylov{H}{g}{t} = \text{Span}\{\f{g},\f{Hg}, \f{H}^2\f{g}, \ldots, \f{H}^{t-1}\f{g}\}.
\] 
The grade of $\f{g}$ with respect to $\f{H}$ is a positive integer $g$ such that
\begin{align*}
    \text{dim}(\krylov{H}{g}{t}) = 
    \begin{cases}
        t, & t \leq g,\\
        g, & t > g.
    \end{cases}
\end{align*}

\begin{definition}[$\mu$-Convexity]
    A function $f$ is $\mu$-convex if there exists $0 \leq \mu < \infty$ such that, for any $\f{x}, \f{y} \in \mathbb{R}^{d}$,
    \begin{align}\label{eq:convexity}
        f(\f{y}) \geq f(\f{x}) + \lr{\f{g}(\f{x}), \f{y} - \f{x}} + \frac{\mu}{2}\|\f{y} - \f{x}\|^2.
    \end{align}
    Furthermore, $f$ is called strongly convex if $\mu > 0$, and general convex if $\mu = 0$.
\end{definition}

\begin{definition}[$\varepsilon$-Suboptimality]
\label{def:suboptimality}
    Given $0 < \varepsilon < 1$, a point $\f{x} \in \mathbb{R}^{d}$ is an $\varepsilon$-suboptimal solution to \cref{eq:min_f} if $\delta(\f{x}) \leq \varepsilon$.
\end{definition}

\begin{condition}[$c$-Sufficient Reduction Condition]\label{cond:sufficient_reduction_condition}
    For any $\f{x} \in \mathbb{R}^{d}$, a vector $\f{d} \in \mathbb{R}^{d}$ is said to be $c$-sufficient with respect to a surrogate model $m_{\f{x}}$, with $c > 0$, if
    \begin{align}\label{eq:faithful_cond}
       c \leq \frac{f(\f{x}) - f(\f{x} + \f{d})}{m_{\f{x}}(\f{0}) - m_{\f{x}}(\f{d})}.
    \end{align}
\end{condition}
In this paper, we focus exclusively on the linear surrogate model
\[
m_{\f{x}}(\f{d}) = f(\f{x}) + \lr{\f{g}(\f{x}), \f{d}}.
\]
When the surrogate model $m_{\f{x}}$ is linear, \cref{cond:sufficient_reduction_condition} reduces to the Armijo condition \cref{eq:armijo} with $c = \rho$. While \cref{cond:sufficient_reduction_condition} is standard in optimization, we name it here to facilitate discussions about the directions that are $c_1$-sufficient but not $c_2$-sufficient. 

As noted by \cite{arjevani2019oracle}, for second-order information to be beneficial, some regularity condition on the Hessian is essential. A common choice is the Hessian-Lipschitz smoothness assumption. In fact, the gradient-Lipschitz condition has recently come under increasing scrutiny in machine learning, as many common objectives fail to satisfy it \cite{patelGlobalConvergenceStability2022}, and it may not hold even along the optimization trajectory \cite{cohenGradientDescentNeural2022}. As an alternative, \cite{ahnUnderstandingUnstableConvergence2022} advocates for Lipschitz continuity of the Hessian, arguing that it provides a more realistic regularity condition in such settings. Therefore, in this paper, we focus on functions satisfying Hessian-Lipschitz smoothness.

\begin{assumption}[$\LH$-Lipschitz Hessian] \label{assmp:LH}
    The function $f$ is twice continuously differentiable and bounded below. Moreover, there exists $0 \leq \LH < \infty$ such that
    \[
        \|\He(\f{x}) - \He(\f{y})\| \leq \LH \|\f{x} - \f{y}\|, \quad \forall \f{x}, \f{y} \in \mathbb{R}^{d}.
    \]
\end{assumption}
\cref{assmp:LH} implies that for all $\f{d} \in \mathbb{R}^{d}$,
\begin{subequations}\label{eq:LH}
\begin{align}
    &f(\f{x} + \f{d}) \leq f(\f{x}) + \lr{\f{g}(\f{x}), \f{d}}  + \frac{1}{2}\lr{\f{d},\He(\f{x})\f{d}} + \frac{\LH}{6}\|\f{d}\|^3\label{eq:assmp:LH:fkp1<fk};\\
    &\left\|\f{g}(\f{x} + \f{d}) - \f{g}(\f{x}) - \He(\f{x})\f{d}\right\|  \leq \frac{\LH}{2}\|\f{d}\|^2.\label{eq:assmp:LH:gkp1<p}
\end{align}
\end{subequations}
Proofs of \cref{eq:assmp:LH:fkp1<fk,eq:assmp:LH:gkp1<p} can be found in \cite{nesterov2018lectures}. 

\section{The Conjugate Residual Method}\label{sec:cr}
We now turn our focus to solving the Newton's system. Since we are concentrating on solving the linear system\footnote{In this section, we treat $\f{H}$ and $\f{g}$ as a generic positive definite matrix in $\Rdd$ and a vector in $\Rd$, respectively, which appear in a linear system we aim to solve. They do not necessarily have any geometric meaning or bear any direct relation to our optimization algorithm.}, we temporarily drop the subscript for readability.

Conjugate Residual (\cref{alg:cr}) is an iterative method developed by Hestenes and Stiefel \cite{hestenes1952methods}, originally designed to solve a linear system with $\f{H} \succ \f{0}$. It has since been studied further in the context of semidefinite systems \cite{hayami2001behaviour,hayami2011geometric,hayami2004convergence} and even (singular) indefinite systems \cite{lim2024conjugate,luenberger1970conjugate,greenbaum1997iterative}.

\begin{algorithm}[htbp]
	\caption{Conjugate Residual}\label{alg:cr}
	\begin{algorithmic}[1]
		\Require $\f{H}$, $\f{g}$
		\State $t = 0$, $\f{s}^{(0)} = \zero$, $\f{r}^{(0)} = \f{p}^{(0)} = -\f{g}$
		\While{\textbf{Not Terminated}}
		\State $\alpha^{(t)} = \lr{\f{r}^{(t)},\f{Hr}^{(t)}}/\|\f{Hp}^{(t)}\|^2$
		\State $\f{s}^{(t+1)} = \f{s}^{(t)} + \alpha^{(t)}\f{p}^{(t)}$
        \State $\f{r}^{(t+1)} = \f{r}^{(t)} - \alpha^{(t)}\f{Hp}^{(t)}$
		\State $\gamma^{(t)} = \lr{\f{r}^{(t+1)},\f{Hr}^{(t+1)}}/\lr{\f{r}^{(t)},\f{Hr}^{(t)}}$
		\State $\f{p}^{(t+1)} = \f{r}^{(t)} + \gamma^{(t)}\f{p}^{(t)}$
		\State $t = t + 1$
		\EndWhile
		\State \Return $\f{s}^{(t)}$
	\end{algorithmic}
\end{algorithm}
At each iteration $t$, CR finds an approximation $\f{s}^{(t)}$ to the solution of the linear system within the Krylov subspace $\krylov{H}{g}{t}$, with the constrain that the residual vector $\f{r}^{(t)} \triangleq -\f{g} - \f{Hs}^{(t)}$ is orthogonal to the subspace $\f{H}\krylov{H}{g}{t}$. To put it formally, the two conditions can be written as, 
\begin{align*}
    \f{s}^{(t)} \in \krylov{H}{g}{t},\quad \text{and} \quad 
    \f{r}^{(t)} \perp \f{H}\krylov{H}{g}{t}.
\end{align*}
The latter condition is also referred to as Petrov-Galarkin condition. Imposing the above two conditions results in a series of desirable CR properties.
\begin{property}\label{prop:cr_prop}
    Let $\f{H} \succ \zero \in \Rdd$ and $\f{g} \neq \f{0}$. For any $1 \leq t \leq g$ in \cref{alg:cr}, we have 
        \begin{subequations}
        \begin{align}
            \|\f{r}^{(t)}\| & < \|\f{r}^{(t-1)}\|;\label{eq:rt>rtp1}\\
            \|\f{s}^{(t-1)}\| & < \|\f{s}^{(t)}\|;  \label{eq:stp1>st}\\
            \|\f{Hs}^{(t-1)}\| & < \|\f{Hs}^{(t)}\| \leq \|\f{g}\|; \label{eq:Hstp1>Hst}\\
            \f{s}^{(g)} & = - \f{H}^{-1}\f{g} \label{eq:sg=-Hg}\\
            \lr{\f{g},\f{s}^{(t)}} & < \lr{\f{g},\f{s}^{(t-1)}} < 0;\label{eq:stp1g>stg}\\
            \|\f{Hp}^{(t)}\| & \leq \|\f{Hr}^{(t)}\|; \label{eq:Hr>Hp}\\
            \lr{\f{g}, \f{s}^{(t)}} & \leq -\lr{\f{s}^{(t)},\f{Hs}^{(t)}} < 0,\label{eq:gst<-sHs}\\
            0 & = \lr{\f{s}^{(t)}, \f{Hr}^{(t)}} \label{eq:sHr=0}
        \end{align}
        \end{subequations}
    \noindent where the inequalities in \cref{eq:Hr>Hp,eq:gst<-sHs} are strict for $1 \leq t \leq g-1$.
\end{property}
The properties shown in \cref{prop:cr_prop} are well known within the numerical linear algebra community. Their proofs can be found in many standard texts and references \cite{dahito2019conjugate,saaditerative2003,liu2022minres,lim2025complexity}, and are, therefore, omitted. In contrast, the following properties of CR are less well known, and we include their proofs here for convenience.

\begin{lemma}\label{lemma:alpha_ineq}
    Let $\f{H} \succ \f{0}$ and $\f{g}$ be any vector. In \cref{alg:cr}, we have 
    \begin{align}\label{eq:lemma:alpha_ineq}
        \lambda_{\max}^{-1} & \leq \alpha^{(t)} \leq \lambda_{\min}^{-1},
    \end{align}    
    where $\alpha^{(t)}$ is the scalar generated by \cref{alg:cr} and $0 \leq t \leq g - 1$.
\end{lemma}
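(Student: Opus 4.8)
The plan is to bound the CR scalar $\alpha_t = \lr{\f{r}_t, \f{H}\f{r}_t}/\|\f{H}\f{p}_t\|^2$ by relating the numerator and denominator to Rayleigh quotients of $\f{H}$. The natural first step is to exploit the inequality $\|\f{H}\f{p}_t\| \leq \|\f{H}\f{r}_t\|$ from \cref{eq:Hr>Hp} in \cref{prop:cr_prop}, which immediately gives $\alpha_t \geq \lr{\f{r}_t, \f{H}\f{r}_t}/\|\f{H}\f{r}_t\|^2$. Then, writing $\lr{\f{r}_t, \f{H}\f{r}_t} = \lr{\f{H}^{1/2}\f{r}_t, \f{H}^{1/2}\f{r}_t} = \|\f{H}^{1/2}\f{r}_t\|^2$ and $\|\f{H}\f{r}_t\|^2 = \lr{\f{H}^{1/2}\f{r}_t, \f{H}(\f{H}^{1/2}\f{r}_t)}$, the ratio $\lr{\f{r}_t,\f{H}\f{r}_t}/\|\f{H}\f{r}_t\|^2$ is the reciprocal of a Rayleigh quotient of $\f{H}$ evaluated at $\f{H}^{1/2}\f{r}_t$, hence at least $1/\lambda_{\max}$. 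This establishes the lower bound $\alpha_t \geq 1/\lambda_{\max}$.

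For the upper bound, I would instead directly estimate $\lr{\f{r}_t, \f{H}\f{r}_t} \leq \lambda_{\max}\|\f{r}_t\|^2$ and lower-bound $\|\f{H}\f{p}_t\|^2$. The cleanest route is to use the conjugacy/orthogonality structure of CR: since $\f{p}_t = \f{r}_t + \gamma_{t-1}\f{p}_{t-1}$ and the $\f{p}_i$ are $\f{H}^2$-orthogonal (equivalently $\f{H}\f{p}_i \perp \f{H}\f{p}_j$), one gets $\|\f{H}\f{p}_t\|^2 \geq$ the component of $\f{H}\f{r}_t$ along... more directly, I expect the identity $\lr{\f{r}_t, \f{H}\f{r}_t} = \lr{\f{r}_t, \f{H}\f{p}_t}$ (which follows because $\f{r}_t \perp \f{H}\f{p}_{t-1}$, a standard CR orthogonality relation), so by Cauchy–Schwarz $\lr{\f{r}_t,\f{H}\f{r}_t} = \lr{\f{r}_t, \f{H}\f{p}_t} \leq \|\f{r}_t\|\,\|\f{H}\f{p}_t\|$, giving $\alpha_t \leq \|\f{r}_t\|/\|\f{H}\f{p}_t\|$. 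I then need $\|\f{H}\f{p}_t\| \geq \lambda_{\min}\|\f{r}_t\|$; this would follow from $\|\f{H}\f{p}_t\| \geq \|\f{H}\f{r}_t\| \cdot (\text{something})$ — but actually a more robust argument is to bound $\|\f{H}\f{p}_t\|^2 \geq \lambda_{\min}\lr{\f{p}_t, \f{H}\f{p}_t} \geq \lambda_{\min}\lr{\f{r}_t, \f{H}\f{r}_t}$, where the last inequality uses $\lr{\f{p}_t,\f{H}\f{p}_t} = \lr{\f{r}_t,\f{H}\f{r}_t} + \gamma_{t-1}^2\lr{\f{p}_{t-1},\f{H}\f{p}_{t-1}} \geq \lr{\f{r}_t,\f{H}\f{r}_t}$ together with $\lr{\f{r}_t, \f{H}\f{p}_{t-1}} = 0$. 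Combining, $\alpha_t = \lr{\f{r}_t,\f{H}\f{r}_t}/\|\f{H}\f{p}_t\|^2 \leq \lr{\f{r}_t,\f{H}\f{r}_t}/(\lambda_{\min}\lr{\f{r}_t,\f{H}\f{r}_t}) = 1/\lambda_{\min}$.

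The main obstacle I anticipate is making sure I invoke the correct CR orthogonality relations — specifically $\lr{\f{r}_t, \f{H}\f{p}_{t-1}} = 0$ and the $\f{H}^2$-conjugacy of the search directions $\{\f{p}_i\}$ — and that these hold for all $0 \leq t \leq g-1$ (so that $\f{r}_t, \f{p}_t$ are nonzero and $\f{H}\f{p}_t \neq \f0$, keeping the quotient well-defined). These facts are standard and are among the properties deferred to \cref{sec:further_opt_prop_cr}, so I would cite them. Once the orthogonality identities are in hand, both bounds reduce to one-line Rayleigh-quotient estimates; the edge case $t = g$ is excluded by hypothesis since then $\f{r}_g \in \Null(\f{H})$ or $\f{r}_g = \f0$ and $\alpha_g$ need not be defined. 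A final check is that the bound $\lr{\f{r}_t, \f{H}\f{r}_t}/\|\f{H}\f{r}_t\|^2 \geq 1/\lambda_{\max}$ uses $\f{H}\f{r}_t \neq \f0$, which holds precisely because $t \leq g-1$.
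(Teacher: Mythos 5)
Your proposal is correct and follows essentially the same route as the paper: the lower bound is identical (use $\|\f{Hp}_t\| \leq \|\f{Hr}_t\|$ from \cref{eq:Hr>Hp} and then a Rayleigh-quotient estimate, which the paper phrases via $\f{H}^{-1}$ at $\f{Hr}_t$ and you phrase equivalently via $\f{H}$ at $\f{H}^{1/2}\f{r}_t$), and your upper bound hinges on the same key orthogonality $\lr{\f{r}_t,\f{Hp}_{t-1}} = 0$, only packaged slightly differently—you bound $\|\f{Hp}_t\|^2 \geq \lambda_{\min}\lr{\f{p}_t,\f{Hp}_t} \geq \lambda_{\min}\lr{\f{r}_t,\f{Hr}_t}$, whereas the paper uses $\lr{\f{r}_t,\f{Hr}_t} = \lr{\f{r}_t,\f{Hp}_t}$ with Cauchy--Schwarz to get $\alpha_t \leq \|\f{r}_t\|^2/\lr{\f{r}_t,\f{Hr}_t} \leq 1/\lambda_{\min}$. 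Both final estimates are valid one-line eigenvalue bounds, so the difference is cosmetic rather than substantive.
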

\begin{proof}
     For the lower bound, we have
    \begin{align*}
        \frac{1}{\lambda_{\max}} \leq \frac{\lr{\f{Hr}^{(t)}, \f{H}^{-1}\f{Hr}^{(t)}}}{\|\f{Hr}^{(t)}\|^2} = \frac{\lr{\f{r}^{(t)}, \f{Hr}^{(t)}}}{\|\f{Hr}^{(t)}\|^2} \op{\cref{eq:Hr>Hp}}{\leq} \frac{\lr{\f{r}^{(t)}, \f{Hr}^{(t)}}}{\|\f{Hp}^{(t)}\|^2} = \alpha^{(t)},
    \end{align*}
    where the first inequality follows from $\lr{\f{Hr}^{(t)}, \f{H}^{-1}\f{Hr}^{(t)}} \geq \lambda_{\max}^{-1}\|\f{Hr}^{(t)}\|^2$. For the upper bound, we have 
    \begin{align*}
        \frac{\lr{\f{r}^{(t)}, \f{Hr}^{(t)}}^2}{\|\f{Hp}^{(t)}\|^2} & = \frac{\lr{\f{r}^{(t)}, \f{H}(\f{r}^{(t)} + \beta^{(t-1)}\f{p}^{(t-1)})}^2}{\|\f{Hp}^{(t)}\|^2} = \frac{\lr{\f{r}^{(t)}, \f{Hp}^{(t)}}^2}{\|\f{Hp}^{(t)}\|^2} \leq \|\f{r}^{(t)}\|^2,
    \end{align*}
    which in turn implies
    \begin{align*}
        \alpha^{(t)} & = \frac{\lr{\f{r}^{(t)}, \f{Hr}^{(t)}}}{\|\f{Hp}^{(t)}\|^2} \leq \frac{\|\f{r}^{(t)}\|^2}{\lr{\f{r}^{(t)}, \f{Hr}^{(t)}}} \leq \frac{1}{\lambda_{\min}}.
    \end{align*}
\end{proof}
The following property is a stronger version of \cref{eq:stp1g>stg}, which will be utilized in later sections.
\begin{lemma}\label{lem:bst>bstm1+lnr}
    Let $\f{H} \succ \f{0}$ and $\f{g}$ be any vector. For $1 \leq t \leq g$, in \cref{alg:cr}, we have
    \begin{align}\label{eq:lem:gst<gstm1-lr_weaker}
        \lr{\f{g},\f{s}^{(t)}} & \leq \lr{\f{g},\f{s}^{(t-1)}} - \lambda_{\max}^{-1}\|\f{r}^{(t-1)}\|^2.
    \end{align}
\end{lemma}
\begin{proof}
    By the construction of $\f{s}^{(t)} = \f{s}^{(t-1)} + \alpha^{(t-1)}\f{p}^{(t)}$, we note
    \begin{align*}
        \lr{\f{g},\f{s}^{(t)}} & = \lr{\f{g},\f{s}^{(t-1)}} + \alpha^{(t-1)}\lr{\f{g},\f{p}^{(t-1)}}\\
        & = \lr{\f{g},\f{s}^{(t-1)}} - \alpha^{(t-1)}\lr{\f{r}^{(t-1)},\f{Hr}^{(t-1)}}\sum_{i=0}^{t-1}\frac{\|\f{r}^{(i)}\|^2}{\lr{\f{r}^{(i)},\f{Hr}^{(i)}}},
    \end{align*}
    where the second equality is obtained from expanding $\f{p}^{(t-1)}$. If we drop all terms except for the $(t-1)$\textsuperscript{th} term from the summand, we obtain
    \begin{align*}
        \lr{\f{g},\f{s}^{(t)}} & = \lr{\f{g},\f{s}^{(t-1)}} - \alpha^{(t-1)}\lr{\f{r}^{(t-1)},\f{Hr}^{(t-1)}}\sum_{i=0}^{t-1}\frac{\|\f{r}^{(i)}\|^2}{\lr{\f{r}^{(i)},\f{Hr}^{(i)}}}\\
        & \leq \lr{\f{g},\f{s}^{(t-1)}} - \alpha^{(t-1)}\|\f{r}^{(t-1)}\|^2 \op{\cref{eq:lemma:alpha_ineq}}{\leq} \lr{\f{g},\f{s}^{(t-1)}} - \lambda_{\max}^{-1}\|\f{r}^{(t-1)}\|^2.
    \end{align*}
\end{proof}

\section{Faithful-Newton-Conjugate-Residual with Line-search}\label{sec:fncr_ls}
In this section, we introduce Faithful-Newton-Conjugate-Residual with line-search (FNCR-LS), \cref{alg:fncr-ls}, as one particular instantiation of the FN framework.
\begin{algorithm}[htbp]
\caption{Faithful-Newton-Conjugate-Residual with Line-Search (FNCR-LS)}
\label{alg:fncr-ls}
\begin{algorithmic}[1]
    \Require $\f{x}_0$ \hfill \Comment{Initial iterate}
    \State $k \gets 0$
    \While{termination criterion not met}
        \State $\f{s}_k \gets$ \cref{alg:cr_with_sd_si} with $(\f{H}_k, \f{g}_k)$ \hfill \Comment{CR solver}
        \State $\f{d}_k \gets$ \cref{alg:backtracking} with $\f{s}_k$ \hfill \Comment{Line-search}
        \State $\f{x}_{k+1} \gets \f{x}_k + \f{d}_k$
        \State $k \gets k+1$
    \EndWhile
    \State \Return $\f{x}_k$ \hfill \Comment{Final iterate}
\end{algorithmic}
\end{algorithm}

\begin{algorithm}[ht]
\caption{Conjugate Residual with Sufficient Iteration and Descent Conditions}
\label{alg:cr_with_sd_si}
\begin{algorithmic}[1]
    \Require $\f{H}$, $\f{g}$, $1 \leq \T \leq \TM \leq d$, $0 < \rho < 1/2$, $0 \leq \omega < 1$
    \State $t \gets 0$, $\f{s}^{(0)} \gets \zero$, $\f{r}^{(0)} \gets \f{p}^{(0)} \gets -\f{g}$, $\rho_0 \gets \rho$
    \While{$(t < \T)$ \textbf{or} ($\f{s}^{(t)}$ is $\rho_t$-sufficient)} \hfill \Comment{Sufficient Iteration \& Descent conds.}
        \If{$\|\f{r}^{(t)}\| \leq \omega \|\f{g}\|$ \textbf{or} $t = \TM$} \hfill \Comment{Residual \& Max. iteration conds.}
            \State \Return $\f{s}^{(t)}$, \texttt{dType} = \texttt{TER}
        \EndIf
        \State $\alpha^{(t)} \gets {\lr{\f{r}^{(t)},\f{H}\f{r}^{(t)}}}/{\|\f{H}\f{p}^{(t)}\|^2}$ \hfill \Comment{Standard CR iteration (Lines 6-11)}
        \State $\f{s}^{(t+1)} \gets \f{s}^{(t)} + \alpha^{(t)}\f{p}^{(t)}$
        \State $\f{r}^{(t+1)} \gets \f{r}^{(t)} - \alpha^{(t)}\f{H}\f{p}^{(t)}$
        \State $\gamma^{(t)} \gets {\lr{\f{r}^{(t+1)},\f{H}\f{r}^{(t+1)}}}/{\lr{\f{r}^{(t)},\f{H}\f{r}^{(t)}}}$
        \State $\f{p}^{(t+1)} \gets \f{r}^{(t+1)} + \gamma^{(t)}\f{p}^{(t)}$
        \State $t \gets t+1$
        \State $\rho_t \gets \rho\|\f{g}\|^2 / \|\f{r}^{(t-1)}\|^2$ \Comment{Adaptive sufficiency parameter}
    \EndWhile
    \If{$t = \T$}
        \State \Return $\f{s}^{(t)}$, \texttt{dType} = \texttt{INS}
    \Else
        \State \Return $\f{s}^{(t-1)}$, \texttt{dType} = \texttt{SUF}
    \EndIf
\end{algorithmic}
\end{algorithm}

\begin{algorithm}[htbp]
\caption{Backtracking Line-search}
\label{alg:backtracking}
\begin{algorithmic}[1]
    \Require $\f{s}$, $0 < \rho < 1/2$, $0 < \zeta < 1$, $\eta_0 = 1$
    \State $j \gets 0$, $\eta \gets \eta_0$
    \While{$\eta \f{s}$ is not $\rho$-sufficient}
        \State $\eta \gets \eta_0 \zeta^j$
        \State $j \gets j+1$
    \EndWhile
    \State \Return $\eta \f{s}$
\end{algorithmic}
\end{algorithm}

FNCR-LS integrates CR (inner iterations) and Newton's method (outer iterations) while incorporating a backtracking line-search routine.  
The key difference between FNCR-LS and the classical inexact Newton's method lies in how the inner solver CR is utilized.  
Specifically, FNCR-LS augments the classical CR with two additional mechanisms---sufficient descent and sufficient iteration conditions---to integrate the outer and inner solvers; see \cref{alg:cr_with_sd_si} where these specific mechanisms can be seen in line 2. The first and second conditions, separated by a disjunction, are the sufficient iteration and sufficient descent conditions, respectively.
FNCR-LS can be seen as a generalized inexact Newton's method. With specific hyperparameter settings, FNCR-LS can be reduced to several familiar Newton-type methods, including inexact/damped Newton's method. 

The algorithmic procedure arising from the coupling of \cref{alg:fncr-ls} (outer iterations) and \cref{alg:cr_with_sd_si} (inner iterations) can be described as follows. For ease of exposition of FNCR-LS, suppose $\T = 1$ and $\rho_t = \rho$ in \cref{alg:cr_with_sd_si}, i.e., with the sufficient iteration condition disabled. 
At each outer iteration $k$, FNCR-LS (\cref{alg:fncr-ls}) calls the modified CR solver (\cref{alg:cr_with_sd_si}) to approximately solve the Newton system, starting from $\f{s}_k^{(0)} = \f{0}$.
For each inner iteration $t \geq 1$, the solver checks whether $\f{s}_k^{(t)}$ is $\rho$-sufficient for some $0 < \rho < 1/2$.
If so, it proceeds with a standard CR update. The process stops when:
(1) $\f{s}_k^{(t)}$ is no longer $\rho$-sufficient,
(2) an approximate solution is found, i.e., $\|\f{g}_k + \f{H}_k\f{s}_k^{(t)}\| \leq \omega\|\f{g}_k\|$, or
(3) the maximum number of inner iterations $\TM$ is reached.

In case (1), if $t>1$, the previous iterate $\f{s}_k^{(t-1)}$ (which was $\rho$-sufficient) is returned as a \texttt{SUF}-type vector; if $t=1$, the method instead returns $\f{s}_k^{(1)} = -\alpha_k^{(0)} \f{g}_k$ as \texttt{INS}-type.
In FNCR-LS, a \texttt{SUF}-type vector is accepted directly as $\f{d}_{k}$ (skipping backtracking) and used to update $\f{x}_{k+1} = \f{x}_k + \f{d}_k$, while an \texttt{INS}-type vector triggers the backtracking routine (\cref{alg:backtracking}).
In cases (2) or (3), the solver returns $\f{s}_k^{(t)}$ as \texttt{TER}-type.
For $\T = 1$, since the sufficiency check precedes termination, any \texttt{TER}-type vector is also $\rho$-sufficient and thus treated like \texttt{SUF}-type in FNCR-LS.

One potential issue of FNCR-LS with $\T = 1$ is that, in the worst case, $\f{s}_k^{(1)} = -\alpha_k^{(0)}\f{g}_k$ may fail to be $\rho$-sufficient at every outer iteration $k$. In this situation, \cref{alg:cr_with_sd_si} always returns $\f{s}_k^{(1)}$ from line 14 as \texttt{INS}-type, causing FNCR-LS to degenerate into gradient descent with line-search. To prevent this, we set $\T > 1$, so that the sufficient descent check is skipped for $t < \T$. This ensures that, when \texttt{dtype} = \texttt{INS}, the returned vector $\f{s}_k^{(t)}$ is not parallel to the gradient, thereby avoiding reduction to a gradient method.

Furthermore, using the monotonic decreasing residual property \cref{eq:rt>rtp1} of CR, we can adapt the sufficiency parameter $\rho$ dynamically as
\begin{align}
\label{eq:beta_t}
\rho_t = \frac{\|\f{g}\|^2 }{\|\f{r}_k^{(t-1)}\|^2} \cdot \rho, \quad \rho_0 = \rho, \quad 0 < \rho < 1/2.
\end{align}
By monotonicity, $\rho_t$ increases with $t$, making the sufficiency condition progressively stricter. That is, unless $\f{s}_k^{(t)}$ provides a strong descent direction, FNCR-LS moves to the next outer iterate rather than spending additional computation on the current Newton system.

The  return types of \cref{alg:cr_with_sd_si} and their implications within \cref{alg:fncr-ls} are compactly given in \cref{tab:cr_return_types_compact}.
\begin{table}[htbp]
\centering
\caption{Return Types of \cref{alg:cr_with_sd_si} in FNCR-LS}
\begin{tabular}{|c|c|c|c|}
\hline
\textbf{dtype} & \textbf{Return} & \textbf{$\rho_t$-suff.} & \textbf{$\rho$-suff. / Notes} \\
\hline
\texttt{SUF} & $\f{s}_k^{(t-1)}, t>\T$ & Yes & Yes (by \cref{eq:rt>rtp1}) \\
\texttt{INS} & $\f{s}_k^{(t)}, t=\T$ & No & May or may not \\
\texttt{TER} & $\f{s}_k^{(t)}$ & $t=\TM$: Yes & $\|\f{r}_k\|\le \omega\|\f{g}_k\|$: Maybe; $t=\TM$: Yes \\
\hline
\end{tabular}
\label{tab:cr_return_types_compact}
\end{table}

Furthermore, when \cref{alg:cr_with_sd_si} terminates, the residual norm must satisfy one of the following: if \cref{alg:cr_with_sd_si} terminates due to the relative residual condition, then $\|\f{r}_k^{(t)}\| \leq \omega\|\f{g}_k\|$. Otherwise, termination occurs at some $t$ with $\T \leq t \leq \TM$, which implies
\begin{align}\label{eq:cr_convergence}
    \|\f{r}_k^{(t)}\| \leq \|\f{r}_k^{(\T)}\| \leq 2\left(\frac{\sqrt{\kappa} - 1}{\sqrt{\kappa} + 1}\right)^{\T}\|\f{g}_k\| \leq 2\exp\left(\frac{-2\T}{\sqrt{\kappa}}\right)\|\f{g}_k\|,
\end{align}
by the standard CR convergence rate \cite{saaditerative2003}. 

In essence, the primary difference between the classical inexact Newton's method and FNCR-LS lies in how they terminate CR. The classical method stops based on the relative residual or maximum iteration conditions, whereas FNCR-LS additionally enforces the sufficient descent condition. This simple algorithmic modification can lead to significant practical improvements (see \cref{sec:num_exp}). Moreover, with specific hyperparameter settings, FNCR-LS reduces to several familiar algorithms: with $0 < \omega < 1$ and $\T = d$, it becomes inexact or truncated Newton’s method; with $\omega = 0$ and $\T = d$, it reduces to damped Newton’s method; and with $\TM = 1$, it becomes CR-scaled gradient descent with line-search \cite{smee2025first}.
%

For clarity, the inner solver counter $t$ (i.e., $\f{s}_k^{(t)}$) can generally be omitted in our analyses; we retain only the outer solver counter $\f{s}_k$ without causing ambiguity. The inner counter will be reintroduced when necessary. We now present the following lemma, which will be used throughout the rest of this paper.
\begin{lemma}\label{lem:descent_eta_region}
Let $h > 0$, $0 < \rho < 1/2$, and $1 \leq t \leq d$. Suppose \cref{assmp:LH} holds, and let $\f{s}_k$ be a vector generated by \cref{alg:cr_with_sd_si} with $\f{H}_k$ and $\f{g}_k$, where $\f{H}_k$ is either 
\[
\f{H}_k = \He(\f{x}_k) \succeq h\f{I} \quad \text{or} \quad \f{H}_k = \He(\f{x}_k) + h\f{I} \succeq h\f{I}.
\] 
If the step-size $\eta$ satisfies
\begin{align}\label{eq:lem:descent_eta_region}
    0 \leq \eta \leq \min\Biggl\{1, \, \sqrt{\frac{3(1 - 2\rho)}{\LH}} \frac{h^{3/4}}{|\langle \f{g}_k,\f{s}_k \rangle|^{1/4}} \Biggr\},
\end{align}  
then $\eta \f{s}_k$ is $\rho$-sufficient, i.e., $f(\f{x}_k + \eta \f{s}_k) \leq f_k + \rho \eta \langle \f{g}_k, \f{s}_k \rangle$.
\end{lemma}
The proof follows standard arguments in the optimization literature and is deferred to \cref{sec:deferred_proofs}.
By \cref{lem:descent_eta_region}, \cref{alg:backtracking} guarantees to return a $\rho$-sufficient vector $\eta \f{s}_k$ with a step-size $\eta$ satisfying
\begin{align}\label{eq:backtracking_eta}
    0 < \min\Biggl\{1, \, \zeta \sqrt{\frac{3(1 - 2\rho)}{\LH}} \frac{h^{3/4}}{|\langle \f{g},\f{s}_k \rangle|^{1/4}} \Biggr\} \leq \eta.
\end{align}

Using these results, we next establish the iteration and operation complexities of FNCR-LS. We first consider the simpler strongly convex setting in \cref{sec:strongly_convex_functions}, followed by extensions to general convexity in \cref{sec:general_convex_functions}.

\subsection{Strongly Convex Setting}\label{sec:strongly_convex_functions}
In this section, we consider strongly convex functions, i.e., functions satisfying \cref{eq:convexity} with $\mu > 0$.  
Here the Hessian placeholder is the true Hessian, $\f{H}(\f{x}) = \He(\f{x})$, so that $\lambda_{\min} \geq \mu$.  
With the choice $\f{H}_k = \He(\f{x}_k)$, we can derive the following upper bound using the CR properties \cref{eq:sHr=0,eq:Hstp1>Hst}.
We note,
\begin{align*}
    \|\f{g}_k + \f{H}_k(\eta\f{s}_k)\| & = \|\f{g}_k + \f{H}_k\f{s}_k - (\f{H}_k\f{s}_k - \f{H}_k(\eta\f{s}_k))\| = \|-\f{r}_k - (1-\eta)\f{H}_k\f{s}_k\|\\
    & \op{\cref{eq:sHr=0}}{=} \sqrt{\|\f{r}_k\|^2 + (1-\eta)^2\|\f{H}_k\f{s}_k\|^2} \op{\cref{eq:Hstp1>Hst}}{\leq} \sqrt{\|\f{r}_k\|^2 + (1 - \eta)^2\|\f{g}_k\|^2}.
\end{align*}
By \cref{eq:assmp:LH:gkp1<p} and the triangle inequality, with $\f{d} = \eta\f{s}_k$, the above inequality implies the following bound,
\begin{align}\label{eq:assmp:LH:cr_bound}
    \|\f{g}_{k+1}\| \leq \frac{\LH\eta^2}{2}\|\f{s}_k\|^2 + \sqrt{\|\f{r}_k\|^2 + (1 - \eta)^2\|\f{g}_k\|^2}.
\end{align}

To derive global complexity guarantee, we first note that by the strong convexity of $f$, for any $\f{x} \in \F$, we have $\|\f{x} - \f{x}^*\| \leq \sqrt{2\delta(\f{x})/\mu} \leq \sqrt{2\delta_0/\mu}$, which implies the sub-level set $\F$ is bounded. 
Thus, our global complexity analysis can be restricted to the compact region defined by $\F$. Since $f$ is twice continuously differentiable, then there exists a constant $ H(\mathbf{x}_0) \triangleq H_0 > 0 $, depending on the initial point $\mathbf{x}_0$, such that $\|\f{H}(\f{x})\| \leq H_0$, for all $\f{x} \in \F$. In this case, the condition number is defined by $\kappa \triangleq H_0 / \mu$.
We are now ready to show the convergence of FNCR-LS under strongly convex settings. 
\begin{lemma}\label{lem:omega_fncr}
Let $f$ be $\mu$-strongly convex and \cref{assmp:LH} hold. Then FNCR-LS (\cref{alg:fncr-ls}) guarantees one of the following for each iteration $k$:\bigskip

\begin{subequations}\label{eq:lem:omega_fncr}
\noindent 
\begin{itemize}
\item \textbf{Case 1:} If $\f{s}_k$ is not $\rho$-sufficient and $\|\f{g}_{k+1}\| > 2\sqrt{2}\|\f{g}_k\|$,  
or if $\f{s}_k$ is $\rho$-sufficient and $\|\f{g}_{k+1}\| > 2\|\f{r}_k\|$, then
\begin{align}
    \delta_{k+1} &< \left(1 - \frac{\sqrt{2}\rho\mu^{3/2}}{\sqrt{2}\rho\mu^{3/2} + \LH \sqrt{\delta_{k+1}}}\right)\delta_k; \label{eq:lem:omega_fncr:1}
\end{align}

\item \textbf{Case 2:} If $\f{s}_k$ is not $\rho$-sufficient and $2\sqrt{2}\|\f{g}_k\| \geq \|\f{g}_{k+1}\|$, then
\begin{align}
    \delta_{k+1} &< \left(1 - \frac{3\zeta\rho(1-2\rho)\mu^{3/2}}{3\zeta\rho(1-2\rho)\mu^{3/2} + 2\LH\sqrt{\kappa}\sqrt{\delta_{k+1}}}\right)\delta_k; \label{eq:lem:omega_fncr:2}
\end{align}

\item \textbf{Case 3:} If $\f{s}_k$ is $\rho$-sufficient and $2\|\f{r}_k\| \geq \|\f{g}_{k+1}\|$, then
\begin{align}
    \delta_{k+1} &\leq \left(1 - \frac{\rho}{\rho + 2\kappa\omega^{2}}\right)\delta_k, \qquad \text{or} \label{eq:lem:omega_fncr:3} \\
    \delta_{k+1} &< \left(1 - \frac{\rho}{\rho + 8\kappa \exp\!\left(-4\T/\sqrt{\kappa}\right)}\right)\delta_k. \label{eq:lem:omega_fncr:4}
\end{align}
\end{itemize}
\end{subequations}
\end{lemma}

\begin{proof}
    From the characterization of the return types above, the reduction analysis can be divided into two cases, depending on whether the returned vector $\f{s}_k$ from \cref{alg:cr_with_sd_si} is $\rho$-sufficient or not. 
    If $\f{s}_k$ is not $\rho$-sufficient, from \cref{eq:assmp:LH:cr_bound}, we have,
    \begin{align*}
        \|\f{g}_{k+1}\| \leq \frac{\LH\eta^2}{2}\|\f{s}_k\|^2 + \sqrt{\|\f{r}_k\|^2 + (1-\eta)^2\|\f{g}_k\|^2} \op{(0 \, < \,\eta),\, \cref{eq:rt>rtp1}}{<} \frac{\LH\eta^2}{2}\|\f{s}_k\|^2 + \sqrt{2}\|\f{g}_k\|.
    \end{align*}
    If $\|\f{g}_{k+1}\| > 2\sqrt{2}\|\f{g}_k\|$, then $\LH\eta^2\|\f{s}_k\|^2 > \|\f{g}_{k+1}\|$ and,
    \begin{align*}
        \delta_{k+1} - \delta_k & \leq \rho\eta\lr{\f{g}_k,\f{s}_k} \op{\cref{eq:gst<-sHs}}{\leq} - \rho\mu\eta\|\f{s}_k\|^2 < -\frac{\rho\mu}{\LH\eta}\|\f{g}_{k+1}\| \op{(\eta \, < \, 1)}{<} - \frac{\rho\mu}{\LH}\sqrt{2\mu\delta_{k+1}}.
    \end{align*}
    With a few algebraic manipulations, we have
    \begin{align*}
        \delta_{k+1}\left(1 + \frac{\sqrt{2}\rho\mu^{3/2}}{\LH\sqrt{\delta_{k+1}}}\right) = \delta_{k+1} + \frac{\sqrt{2}\rho\mu^{3/2}\sqrt{\delta_{k+1}}}{\LH} & < \delta_k,
    \end{align*}
    which  implies \cref{eq:lem:omega_fncr:1}.
    We now consider the case, $2\sqrt{2}\|\f{g}_k\| \geq \|\f{g}_{k+1}\|$. By \emph{modus tollen}, \cref{lem:descent_eta_region} with $\eta = 1$ and $h = \mu$ implies, 
    \begin{align}\label{eq:sk_insufficient}
        \sqrt{\frac{3(1-2\rho)}{\LH}}\mu^{3/4} < |\lr{\f{g}_k,\f{s}_k}|^{1/4}.
    \end{align}
    By \cref{alg:backtracking}, a $\rho$-sufficient vector $\eta\f{s}_k$ will be returned, where $\eta$ satisfies \cref{eq:backtracking_eta}, and, hence,
    \begin{align*}
        \delta_{k+1} - \delta_k & \leq \rho\eta\lr{\f{g}_k,\f{s}_k} \op{\cref{eq:backtracking_eta}}{<} - \rho\zeta\sqrt{\frac{3(1-2\rho)\mu^{3/2}}{\LH}}\left|\lr{\f{g}_k,\f{s}_k}\right|^{3/4}\\
        & \op{\cref{eq:sk_insufficient}}{<} -\frac{3\zeta\rho(1 - 2\rho)\mu^{3/2}}{\LH}\left|\lr{\f{g}_k,\f{s}_k}\right|^{1/2} \op{\cref{eq:stp1g>stg},\cref{eq:lemma:alpha_ineq}}{\leq} -\frac{3\zeta\rho(1 - 2\rho)\mu^{3/2}}{\LH}\frac{\|\f{g}_{k}\|}{\sqrt{H_0}}\\
        & \leq -\frac{3\zeta\rho(1 - 2\rho)\mu}{\LH\sqrt{\kappa}}\frac{\|\f{g}_{k+1}\|}{2\sqrt{2}} < -\frac{3\zeta\rho(1 - 2\rho)\mu}{\LH\sqrt{\kappa}}\frac{\sqrt{2\mu\delta_{k+1}}}{2\sqrt{2}}.
    \end{align*}
    With a few algebraic manipulations, this implies \cref{eq:lem:omega_fncr:2}.
    Next, we consider the case where $\f{s}_k$ is $\rho$-sufficient. From \cref{eq:assmp:LH:cr_bound}, we now have, $\|\f{g}_{k+1}\| \leq \LH\|\f{s}_k\|^2/2 + \|\f{r}_k\|$.
    If $\|\f{g}_{k+1}\| > 2\|\f{r}_k\|$, then $\LH\|\f{s}_k\|^2 > \|\f{g}_{k+1}\|$ and
    \begin{align*}
        \delta_{k+1} - \delta_k & \leq \rho\lr{\f{g}_k,\f{s}_k} \op{\cref{eq:gst<-sHs}}{\leq} - \rho\mu\|\f{s}_k\|^2 < - \frac{\rho\mu}{\LH}\|\f{g}_{k+1}\| \leq - \frac{\rho\mu}{\LH}\sqrt{2\mu\delta_{k+1}},
    \end{align*}
    which implies \cref{eq:lem:omega_fncr:1}. 
    However, if $2\|\f{r}_k\| \geq \|\f{g}_{k+1}\|$, then by the discussion of the FNCR-LS return types above, \cref{alg:cr_with_sd_si} must return $\f{s}_k$ satisfying either $\|\f{r}_k\| \leq \omega\|\f{g}_k\|$ or $\|\f{r}_k\| \leq 2\exp(-2\T/\sqrt{\kappa})\|\f{g}_k\|$. 
    These imply, respectively, 
    \[
    2\omega\|\f{g}_k\| \geq \|\f{g}_{k+1}\| 
    \quad \text{and} \quad 
    4\exp(-2\T/\sqrt{\kappa})\|\f{g}_k\| \geq \|\f{g}_{k+1}\|.
    \]
    In the former case, we have
    \begin{align*}
        \delta_{k+1} - \delta_k 
        &\leq \rho\langle \f{g}_k,\f{s}_k\rangle 
        \overset{\cref{eq:stp1g>stg},\,\cref{eq:lemma:alpha_ineq}}{\leq} 
        - \frac{\rho}{H_0}\|\f{g}_k\|^2 
        \leq - \frac{\rho}{4H_0\omega^2}\|\f{g}_{k+1}\|^2 
        \leq - \frac{\rho\mu}{2H_0\omega^2}\delta_{k+1},
    \end{align*}
    which implies \cref{eq:lem:omega_fncr:3}.  
    Otherwise, \cref{eq:lem:omega_fncr:4} follows by a similar argument.
\end{proof}

As a sanity check to ensure \cref{lem:omega_fncr} aligns with our understanding of Newton’s method, consider a quadratic problem $f(\f{x}) = 0.5\langle\f{x},\f{Ax}\rangle - \langle\f{b},\f{x}\rangle$ with $\LH = 0$. Classical Newton's method converges in one step, and in this case \cref{eq:lem:omega_fncr:1,eq:lem:omega_fncr:2} also imply one-step convergence, leaving only \cref{eq:lem:omega_fncr:3,eq:lem:omega_fncr:4}. With $\T = \TM = d$ and $\omega = 0$, \cref{alg:cr_with_sd_si} terminates by the inexactness condition, returning $\f{s}_k = -\f{H}_k^{-1}\f{g}_k$ and thus yielding \cref{eq:lem:omega_fncr:3}. Hence, FNCR-LS converges in a single iteration, consistent with Newton’s method.

Next, we derive the overall iteration complexity of FNCR-LS from \cref{lem:omega_fncr}. 
Suppose FNCR-LS runs for $k$ iterations, and define
\begin{align*}
    \mathcal{A}_k \triangleq \bigl\{\, i \leq k \;\big|\; \text{iteration $i$ satisfies \cref{eq:lem:omega_fncr:1} or \cref{eq:lem:omega_fncr:2}} \,\bigr\}.
\end{align*}
First, we note from \cref{lem:omega_fncr} that $\delta_{i+1} < \delta_i$ holds for every iteration, regardless of whether $i \in \mathcal{A}_k$. 
So, by \cref{eq:lem:omega_fncr}, for every iteration $k$ we obtain $\delta_{k+1} \leq c_0\delta_k$, where $c_0 \triangleq 1 - (1 + \max\{\kappa c_1,c_2\})^{-1}$ with
\begin{align*}
c_1 &\triangleq \frac{1}{\rho}\max\left\{2\omega^2,\,8\exp(-4\T/\sqrt{\kappa})\right\}, \\
c_2 &\triangleq \frac{\LH\,\sqrt{\delta_{0}}}{\rho\,\mu^{3/2}}\,
\max\!\left\{\,\frac{1}{\sqrt{2}},\; \frac{2\sqrt{\kappa}}{3\zeta\,(1-2\rho)}\,\right\}.
\end{align*}
Now, for each $i \in \mathcal{A}_k$, we have
\begin{align*}
    \delta_{i+1} & \op{\cref{eq:lem:omega_fncr:1},\,\cref{eq:lem:omega_fncr:2}}{\leq} \left(1 - \frac{1}{1 + c_2\sqrt{\delta_{i+1}}}\right)\delta_i \leq \left(1 - \frac{1}{1 + c_2c_0^{i/2}\sqrt{\delta_{0}}}\right)\delta_i.
\end{align*}
In particular, if $|\mathcal{A}_k| \geq k/2$, then FNCR-LS achieves a superlinear convergence,
\begin{align*}
    \delta_{k+1} & \op{w.l.o.g}{\leq} \left(1 - \frac{1}{1 + c_2c_0^{k/2}\sqrt{\delta_{0}}}\right)\delta_k \leq \prod_{i\in\mathcal{A}_k}\left(1 - \frac{1}{1 + c_2c_0^{i/2}\sqrt{\delta_{0}}}\right)\delta_0\\
    & \leq \prod_{i=0}^{|\mathcal{A}_k|}\left(1 - \frac{1}{1 + c_2c_0^{i/2}\sqrt{\delta_{0}}}\right)\delta_0 \leq \prod_{i=0}^{k/2}\left(1 - \frac{1}{1 + c_2c_0^{i/2}\sqrt{\delta_{0}}}\right)\delta_0.
\end{align*}
Otherwise, $|\mathcal{A}_k^\complement| \geq k/2$, FNCR-LS achieves a linear convergence,
\begin{align*}
    \delta_{k+1} & \leq \left(1 - \frac{1}{1 + \kappa c_1}\right)\delta_k \leq \left(1 - \frac{1}{1 + \kappa c_1}\right)^{|\mathcal{A}_k^\complement|}\delta_0 \leq \left(1 - \frac{1}{1 + \kappa c_1}\right)^{k/2}\delta_0.
\end{align*}
In particular, setting $0 < \omega \leq \sqrt{(2\kappa)^{-1}}$ and $\T \geq \lceil\sqrt{\kappa}\ln(8\kappa)/4\rceil$, we obtain
\begin{align*}
    c_1 = \frac{1}{\rho}\max\left\{2\omega^2, \, 8\exp(-4\T\sqrt{\kappa^{-1}})\right\} \leq \frac{1}{\rho}\max\left\{\frac{1}{\kappa}, \, 8\exp(-\ln(8\kappa))\right\} = \frac{1}{\rho\kappa},
\end{align*}
which yields the following \textit{problem-independent} linear convergence rate,
\begin{align*}
    \delta_{k+1} \leq \frac{\delta_0}{(1 + \rho)^{k/2}}.
\end{align*}

Putting all together, we have the following complexity result.
\begin{corollary}\label{cor:superlinear_linear_fncr-ls}
Let $0 < \omega \leq \sqrt{(2\kappa)^{-1}}$ and $\T \geq \lceil \sqrt{\kappa} \ln(8\kappa)/4 \rceil$. Then FNCR-LS (\cref{alg:fncr-ls}) achieves either superlinear convergence or a linear convergence that is independent of the condition number. 
In particular, for the linear case, the iteration and operation complexities to achieve $\varepsilon$-suboptimality are  
$\mathcal{O}(\ln(\varepsilon^{-1}))$ and 
$\mathcal{O}(\sqrt{\kappa} \ln(\kappa) \ln(\varepsilon^{-1}))$,   respectively.
\end{corollary}

The superlinear convergence of FNCR-LS depends on the quantity $|\mathcal{A}_k|$. Nonetheless, there are two ways to guarantee overall superlinear convergence. 
First, observe that for the linear reductions \cref{eq:lem:omega_fncr:3,eq:lem:omega_fncr:4} to hold, $\f{s}_k$ must be $\rho$-sufficient and $\|\f{g}_{k+1}\| \leq 2\|\f{r}_k\|$. 
By setting $\omega = 0$ and $\TM = d$, \cref{alg:cr_with_sd_si} always returns $\|\f{r}_k\| = 0$. 
Thus, whenever the linear reduction holds, we must also have $\|\f{g}_{k+1}\| = 0$, which by strong convexity implies $\delta_{k+1} = 0$. 
Hence, $\mathcal{A}_k^{\complement}$ can occur at most once (i.e., $|\mathcal{A}_k^{\complement}| = 1$), ensuring overall superlinear convergence. 
The second way is to convert the reductions \cref{eq:lem:omega_fncr:3,eq:lem:omega_fncr:4} into a form similar to \cref{eq:lem:omega_fncr:1,eq:lem:omega_fncr:2}. 
Let $\omega \leq \|\f{g}_k\|$ and $\T \geq \lceil \sqrt{\kappa}\ln(2\|\f{g}_k\|^{-1})/4 \rceil$. Then $\|\f{g}_{k+1}\| \leq 2\|\f{r}_k\| \leq 2\|\f{g}_k\|^2$, and since $\f{s}_k$ is $\rho$-sufficient, we have
\begin{align*}
    \sqrt{2\mu\delta_{k+1}} \leq \|\f{g}_{k+1}\| \leq 2\|\f{g}_k\|^2 \leq \frac{2H_0}{\rho}(\delta_k - \delta_{k+1}),
\end{align*}
which transforms both \cref{eq:lem:omega_fncr:3,eq:lem:omega_fncr:4} into
\begin{align*}
    \delta_{k+1} < \left(1 - \frac{\rho}{\rho + \sqrt{2\kappa H_0}\sqrt{\delta_{k+1}}}\right)\delta_k.
\end{align*}
Using a similar argument as above, we obtain the following result:

\begin{corollary}\label{cor:superlinear}
Let $\omega \leq \|\f{g}_k\|$ and $\T \geq \lceil \sqrt{\kappa}\ln(2\|\f{g}_k\|^{-1})/4 \rceil$. FNCR-LS (\cref{alg:fncr-ls}) achieves superlinear convergence in terms of iteration complexity.
\end{corollary}

\begin{remark}
    For $\T = \TM = d$, FNCR-LS reduces to the damped Newton's method when $\omega = 0$, and to the inexact Newton's method for general $\omega$. 
    In both cases, our analysis shows that these methods can achieve overall superlinear convergence, provided that $\T = d$. 
    This dispels the longstanding view that Newton-type methods necessarily exhibit worse worst-case convergence behavior than gradient methods. 
\end{remark}

Note that the convergence in \cref{cor:superlinear}, while superlinear, is still consistent with the lower bounds for second-order methods under strong convexity and Hessian Lipschitz smoothness established in \cite{arjevani2019oracle}. 
A direct analytic comparison is difficult: our result in \cref{cor:superlinear} characterises a superlinear rate, whereas the lower bound reflects two distinct phases, namely an initial constant reduction in the damped Newton phase, followed by local quadratic convergence in the pure Newton phase. 
A more practical approach is to substitute numerical values for the constants and empirically compare the number of iterations required to reach a prescribed accuracy. 
Across a wide range of tested values, the worst-case superlinear convergence of FNCR-LS is consistently slower than the lower bound.\footnote{For example, with $H_0 = 10^{6}$, $\LH = 100$, $\mu = 5$, $\delta_0 = 25{,}000$, $\|\f{x}_0 - \f{x}^*\| = 100$, and $\varepsilon = 10^{-14}$, the lower bound requires 5 iterations, whereas FNCR-LS requires more than $10^{6}$ iterations under its superlinear rate.}

\cref{cor:superlinear} established a global superlinear convergence rate for FNCR-LS. 
However, similarly to the classical exact Newton's method, FNCR-LS can achieve a  \emph{local quadratic} convergence rate, even when using only inexact updates.

\begin{theorem}[Local Quadratic Convergence with Inexact Newton's Step]\label{thm:fncr:local:quadratic}
Let $1/3 < \rho < 1/2$, 
\[
0 \leq \omega < \frac{\|\f{g}_k\|\LH}{2\mu^2} \quad \text{and} \quad \T \geq \Big\lceil \frac{\sqrt{\kappa}}{2}\ln\Big(\frac{4\LH}{\|\f{g}_k\|\mu^2}\Big) \Big\rceil.
\] 
Let $\f{x}_{0}$ satisfy $\|\f{g}_0\| \leq r$ where
\begin{align}\label{eq:r}
    r \triangleq \frac{3(1-2\rho)\mu^2}{\LH}.
\end{align}
Then FNCR-LS (\cref{alg:fncr-ls}) achieves a quadratic convergence rate:
\begin{align}\label{eq:thm:fncr:local:quadratic}
    \delta_{k+1} & < \left(\frac{\mu^2}{\LH}\right)^2 \frac{3(1-2\rho)^{2^{k+1}}}{2\mu},
\end{align}
implying in  at most $\mathcal{O}(\ln(\ln(\mu^3/(\LH\varepsilon))))$ iterations, it achieves $\varepsilon$-suboptimality.
\end{theorem}


\begin{proof}
We first note that, in this local region, the step $\f{s}_k$ is always $\rho$-sufficient. Indeed, 
\begin{align*}
    |\lr{\f{g}_k,\f{s}_k}| 
    \op{\cref{eq:stp1g>stg}}{\leq} \lr{\f{g}_k,\f{H}_k^{-1}\f{g}_k} 
    \leq \frac{1}{\mu}\|\f{g}_k\|^2 
    \leq \left(\frac{3(1-2\rho)}{\LH}\right)^{2}\mu^3,
\end{align*}
which implies 
\[
1 \leq \sqrt{\frac{3(1-2\rho)}{\LH}}\;\mu^{3/4}|\lr{\f{g}_k,\f{s}_k}|^{-1/4}.
\]
By \cref{lem:descent_eta_region}, it follows that $\f{s}_k$ is $\rho$-sufficient,  
%
Consequently, under the conditions on $\omega$ and $\T$, the residual necessarily satisfies
\[
\|\f{r}_k\| \leq \frac{\LH}{2\mu^2}\|\f{g}_k\|^2.
\]
Applying \cref{eq:assmp:LH:cr_bound} gives
\begin{align*}
    \|\f{g}_{k+1}\| &\leq \frac{\LH}{2}\|\f{s}_k\|^2 + \|\f{r}_k\| 
    \op{\cref{eq:stp1>st}}{\leq} \frac{\LH}{\mu^2}\|\f{g}_k\|^2.
\end{align*}
Since the above inequality holds for all $k$, this implies
\begin{align*}
    \sqrt{2\mu\delta_{k+1}} &\op{\cref{eq:convexity}}{\leq} \|\f{g}_{k+1}\| 
    < \frac{\LH}{\mu^2}\|\f{g}_k\|^2 
    \leq \left(\frac{\LH}{\mu^2}\right)^{2^k-1} \|\f{g}_0\|^{2^k} 
    \leq \frac{\mu^2}{\LH} (3(1-2\rho))^{2^k}.
\end{align*}
With $1/3 < \rho < 1/2$, the result \eqref{eq:thm:fncr:local:quadratic} establishes quadratic convergence.
\end{proof}

\begin{remark}
To the best of our knowledge, \cref{thm:fncr:local:quadratic} is the first result showing that Newton's method, even with inexact Newton steps, can achieve a local quadratic convergence rate.
\end{remark}

\subsection{Extensions to General Convexity}\label{sec:general_convex_functions}
We now go beyond the strongly convex setting and extend FNCR-LS to a broader class of convex functions, where $\mu = 0$ in \cref{eq:convexity}. 
In the strongly convex case, the compactness of the sub-level set $\F$ followed directly from strong convexity. 
In the general convex setting, however, we introduce compactness explicitly as an assumption, following the standard practice in related works~\cite{mishchenko2023regularized,doikov2024gradient,nesterov2006cubic,doikov2024super}.
\begin{assumption}[Bounded Sub-level Set]\label{assmp:boundedness}
The sub-level set $\F$ is compact. More precisely, for any given $\f{x}_{0}$, there exists $0 \leq D < \infty$ such that
\[
\|\f{x} - \f{x}^*\| \leq D, \quad \forall \f{x} \in \F.
\]
\end{assumption}
Together with convexity, \cref{assmp:boundedness} yields the following bound
\begin{align}\label{eq:assmp:boundedess_convexity}
    \delta(\f{x}) & \op{\cref{eq:convexity}}{\leq} \langle \f{g}(\f{x}),\, \f{x} - \f{x}^* \rangle 
    \leq D \|\f{g}(\f{x})\|, \quad \forall \f{x} \in \F.
\end{align}
Furthermore, we employ the gradient-regularized Hessian
\begin{align}
\label{eq:H_reg}    
\f{H}(\f{x}) = \He(\f{x}) + \sigma \sqrt{\|\f{g}(\f{x})\|}, \quad \sigma > 0,
\end{align}
which we will simply denote by $\f{H}$.  
For the remainder of the paper, we refer to this variant of the FN framework as \emph{FNCR-reg-LS}, that is, \cref{alg:fncr-ls} combined with \cref{alg:cr_with_sd_si} and the gradient-regularized Hessian.

To establish the convergence rate of FNCR-reg-LS, we divide the analysis into three stages. 
First, we introduce foundational lemmas (\cref{lem:obj_descents,lem:sufficient_inexactness}), which are adapted from standard arguments in worst-case complexity analysis; their proofs are deferred to \cref{sec:deferred_proofs}. 
Next, we quantify the reductions achieved by \texttt{TER}-type (\cref{lem:reg:sol}), \texttt{INS}-type (\cref{lem:reg:unf}), and \texttt{SUF}-type (\cref{lem:reg:fth}) directions. 
Finally, combining these results, we establish the overall iteration complexity of FNCR-reg-LS in \cref{thm:reg_fncr}.

\begin{lemma}\label{lem:sufficient_inexactness}
    Suppose $f$ is convex and \cref{assmp:LH} holds. 
    Let $\sigma > 0$, $q_1 > 0$, $0 \leq q_2 < 1$, and $0 < \rho < 1/2$. 
    Assume $\|\f{r}_k\| \leq q_2 \|\f{g}_k\|$ and $\f{s}_k$ being $\rho$-sufficient. Then:
    \begin{itemize}
        \item If $\|\f{s}_k\|^2 \geq 2q_1 \|\f{g}_k\| / \LH$, then
        \[
            f(\f{x}_k + \f{s}_k) \;\leq\; f_k - \frac{2\rho\sigma q_1}{\LH} \|\f{g}_k\|^{3/2}.
        \]
        \item Otherwise,
        \begin{align*}
            \|\f{g}(\f{x}_k + \f{s}_k)\| \leq \left(q_1 + \sigma \sqrt{\frac{2q_1}{\LH}} + q_2\right)\|\f{g}_k\|, \quad \text{and} \quad f(\f{x}_k + \f{s}_k) < f_k.
        \end{align*}
    \end{itemize}
\end{lemma}

\begin{lemma}\label{lem:obj_descents}
    Suppose $f$ is convex and \cref{assmp:LH} holds. Let $\sigma > 0$, $0 < \rho < 1/2$, and $1 \leq t \leq d$. 
    If $\f{s}_k$ is not $\rho$-sufficient, then \cref{alg:backtracking} returns a scaled step $\eta \f{s}_k$ that is $\rho$-sufficient and satisfies
    \begin{align*}
        f(\f{x}_k + \eta\f{s}_k) 
        &< f_k - \rho \zeta \left(\frac{3(1 - 2\rho)}{\LH}\right)^{2} \sigma^{3} \|\f{g}_k\|^{3/2}.
    \end{align*}
\end{lemma}

The above lemmas form the basis for analyzing the three types of return directions: \texttt{TER} (\cref{lem:reg:sol}), \texttt{INS} (\cref{lem:reg:unf}), and \texttt{SUF} (\cref{lem:reg:fth}).

\begin{lemma}[\texttt{TER}-type Reduction]\label{lem:reg:sol}
Suppose $f$ is convex and \cref{assmp:LH} holds. Let $\sigma > 0$, $0 < \rho < 1/2$, $0 < \zeta < 1$,
\[
0 \leq \omega < \frac{\sigma^2}{2(\LH + 6\sigma^2)} \quad \text{and} \quad \TM \geq \Big\lceil \frac{\sqrt{\kappa}}{2}\ln\Big(\frac{4(\LH + 6\sigma^2)}{\sigma^2}\Big) \Big\rceil.
\] 
With FNCR-reg-LS (\cref{alg:fncr-ls} with $\f{H}_k$ as in \cref{eq:H_reg}), if \cref{alg:cr_with_sd_si} returns \texttt{dtype} = \texttt{TER}, one of the following holds:
\begin{align}
    \delta_{k+1} &< \delta_k - \rho\zeta\left(\frac{3(1-2\rho)}{\LH}\right)^2\sigma^3\|\f{g}_k\|^{3/2}, \label{eq:lem:reg:sol:1} \\
    \delta_{k+1} &< \delta_k - \frac{\rho\sigma^3}{6(\LH + 6\sigma^2)^2}\|\f{g}_k\|^{3/2}, \label{eq:lem:reg:sol:2} \\
    \|\f{g}_{k+1}\| &< \frac{\sigma^2}{\LH + 6\sigma^2}\|\f{g}_k\|, \quad \text{and} \quad \delta_{k+1} < \delta_k. \label{eq:lem:reg:sol:3}
\end{align}
\end{lemma}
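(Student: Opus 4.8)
The plan is to case-split on whether the conjugate-residual iterate $\f{s}_k^{(t)}$ returned with the \texttt{SOL} label --- which by definition satisfies $\|\f{r}_k^{(t)}\|\leq\omega\|\f{g}_k\|/2$ at some inner count $t<\T$, so its $\beta_t$-sufficiency was never tested and may or may not hold --- is itself $\rho$-sufficient. This dichotomy is exactly what decides whether the backtracking routine \cref{alg:backtracking}, invoked here with parameter $\rho$, accepts the unit step $\eta=1$ or shrinks it, and it aligns precisely with the two structural regimes already isolated in \cref{lem:reg:sufficient_inexactness} (unit step, small residual) and \cref{lem:reg:obj_descents} (line-search kicks in), the latter itself built on the step-size range from \cref{lem:reg:descent_eta_region}.

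\textbf{Case 1: $\f{s}_k^{(t)}$ is $\rho$-sufficient.} Then \cref{alg:backtracking} returns $\eta=1$, so $\f{d}_k=\f{s}_k^{(t)}$ and $\f{x}_{k+1}=\f{x}_k+\f{s}_k^{(t)}$. I would apply \cref{lem:reg:sufficient_inexactness} with $c_1=\omega/2$, $c_2=\rho$, and the constant $c_3=\LH\sigma^2/\bigl(12(\LH+6\sigma^2)^2\bigr)$. Its first alternative, valid when $\|\f{g}_k\|<\LH\|\f{s}_k^{(t)}\|^2/(2c_3)$, gives $f_{k+1}\leq f_k-(2\rho c_3\sigma/\LH)\|\f{g}_k\|^{3/2}$, and this choice of $c_3$ makes the coefficient equal to $\rho\sigma^3/\bigl(6(\LH+6\sigma^2)^2\bigr)$, which is \cref{eq:lem:reg:sol:2}. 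Its complementary alternative gives $\|\f{g}_{k+1}\|<\bigl(c_3+\sqrt{2c_3/\LH}\,\sigma+\omega/2\bigr)\|\f{g}_k\|$ together with $f_{k+1}<f_k$; it then remains only to verify $c_3+\sqrt{2c_3/\LH}\,\sigma+\omega/2\leq\sigma^2/(\LH+6\sigma^2)$ to obtain \cref{eq:lem:reg:sol:3}.

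\textbf{Case 2: $\f{s}_k^{(t)}$ is not $\rho$-sufficient.} Then \cref{lem:reg:obj_descents} with $c_1=c_2=\rho$ applies, showing that \cref{alg:backtracking} returns a $\rho$-sufficient $\f{d}_k=\eta\f{s}_k^{(t)}$ with $f_{k+1}<f_k-\bigl(9\rho\min\{(1-2\rho)^2,\,\zeta(1-2\rho)^2\}/\LH^2\bigr)\sigma^3\|\f{g}_k\|^{3/2}$; since $0<\zeta<1$ the minimum equals $\zeta(1-2\rho)^2$, and the coefficient collapses to $\rho\zeta\bigl(3(1-2\rho)/\LH\bigr)^2\sigma^3$, which is \cref{eq:lem:reg:sol:1}.

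The main obstacle is the numerical bookkeeping in Case 1: one must choose a single $c_3>0$ meeting both $c_3\geq\LH\sigma^2/\bigl(12(\LH+6\sigma^2)^2\bigr)$ (forced by \cref{eq:lem:reg:sol:2}) and $c_3+\sqrt{2c_3/\LH}\,\sigma\leq\sigma^2/(\LH+6\sigma^2)-\omega/2$ (forced by \cref{eq:lem:reg:sol:3}). This is exactly where the hypothesis $\omega\leq\sigma^2/(\LH+6\sigma^2)$ is used: it leaves slack at least $\sigma^2/\bigl(2(\LH+6\sigma^2)\bigr)$ on the right, and picking $c_3$ at its lower bound together with $\LH\leq\LH+6\sigma^2$ yields $c_3+\sqrt{2c_3/\LH}\,\sigma\leq(1/12+1/\sqrt{6})\,\sigma^2/(\LH+6\sigma^2)$, after which $1/12+1/\sqrt{6}<1/2$ closes the gap. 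Everything else --- reading off the $\eta=1$ versus $\eta<1$ behaviour of \cref{alg:backtracking}, and observing that \texttt{SOL} returns occur before the sufficiency test so no sufficiency of $\f{s}_k^{(t)}$ need be assumed a priori --- is routine.
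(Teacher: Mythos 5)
Your proposal is correct and follows essentially the same route as the paper's proof: the same dichotomy on whether $\f{s}_k^{(t)}$ is $\rho$-sufficient, with \cref{lem:reg:obj_decents} replaced by the paper's \cref{lem:reg:obj_descents} applied with $c_1=c_2=\rho$ to get \cref{eq:lem:reg:sol:1}, and \cref{lem:reg:sufficient_inexactness} applied with $c_1=\omega/2$, $c_2=\rho$, $c_3=\LH\sigma^2/(12(\LH+6\sigma^2)^2)$ to get \cref{eq:lem:reg:sol:2} or \cref{eq:lem:reg:sol:3}. Your numerical verification that $c_3+\sqrt{2c_3/\LH}\,\sigma+\omega/2\leq\sigma^2/(\LH+6\sigma^2)$ matches the paper's computation, so there is nothing to add.
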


\begin{proof}
When \cref{alg:cr_with_sd_si} returns \texttt{dtype} = \texttt{TER}, the vector $\f{s}_k$ satisfies either $\|\f{r}_k\| \leq \omega \|\f{g}_k\|$ (relative residual condition), or 
$ \|\f{r}_k\| \leq 2\exp(-2\TM\sqrt{\kappa}^{-1})\|\f{g}_k\|$ (maximum iteration condition). 
In either case, the bounds on $\omega$ and $\TM$ given in the lemma ensure that the residual satisfies 
\[
\|\f{r}_k^{(t)}\| \leq \frac{\sigma^2}{2(\LH + 6\sigma^2)} \|\f{g}_k\|,
\]
and FNCR-reg-LS enters the backtracking line-search (\cref{alg:backtracking}).
If $\f{s}_k$ is not $\rho$-sufficient, the line-search scales $\f{s}_k$ to $\f{d}_k = \eta \f{s}_k$ for some $\eta < 1$, and by \cref{lem:obj_descents}, \cref{eq:lem:reg:sol:1} holds. 
Otherwise, $\f{s}_k$ is $\rho$-sufficient, so $\f{d}_k = \f{s}_k$. Applying \cref{lem:sufficient_inexactness} with 
\[
q_1 = \frac{\LH\sigma^2}{12(\LH + 6\sigma^2)^2}, \quad q_2 = \frac{\sigma^2}{2(\LH + 6\sigma^2)},
\] 
yields \cref{eq:lem:reg:sol:2} and
\begin{align*}
\|\f{g}_{k+1}\| &\leq \left(q_1 + \sigma \sqrt{\frac{2q_1}{\LH}} + q_2\right)\|\f{g}_k\| 
< \frac{\sigma^2}{\LH + 6\sigma^2}\|\f{g}_k\|,
\end{align*}
which establishes \cref{eq:lem:reg:sol:3}.
\end{proof}

\begin{lemma}[\texttt{INS}-type Reduction]\label{lem:reg:unf}
    Suppose $f$ is convex and \cref{assmp:LH} holds. 
    Let $\sigma > 0$, $0 < \rho < \min\{1/2, \sigma^4/(16(\LH + 6\sigma^2)^2)\}$, and $0 < \zeta < 1$. 
    With FNCR-reg-LS (\cref{alg:fncr-ls} using $\f{H}_k$ from \cref{eq:H_reg}), if \cref{alg:cr_with_sd_si} returns \texttt{dtype} = \texttt{INS}, then one of the following holds:
    \begin{align}
        f_{k+1} &< f_k - \zeta\left(\frac{3}{4\LH}\right)^2\sigma^{3}\|\f{g}_k\|^{3/2}, \label{eq:lem:reg:unf:1}
    \end{align}
    or one of \cref{eq:lem:reg:sol:1,eq:lem:reg:sol:2,eq:lem:reg:sol:3} from \cref{lem:reg:sol}.
\end{lemma}

\begin{proof}
    When \cref{alg:cr_with_sd_si} returns \texttt{dtype} = \texttt{INS}, FNCR-reg-LS enters the backtracking line-search routine. 
    We distinguish two cases depending on $\rho_{\T}$.  
    Suppose $\rho_{\T} < 1/4$.  
    In this case, $\f{s}_k^{(\T)}$ is not $\rho_{\T}$-sufficient and hence also not $1/4$-sufficient. 
    Therefore, by \cref{lem:obj_descents} with $\rho = 1/4$, we obtain \cref{eq:lem:reg:unf:1}. 
    Otherwise, if $\rho_{\T} \geq 1/4$, the analysis is similar to that of \cref{lem:reg:sol}. 
    By the construction of the adaptive parameter $\rho_{\T}$ in \cref{eq:beta_t}, we have
    \begin{align*}
        \|\f{r}_k^{(\T)}\| 
        &< \|\f{r}_k^{(\T-1)}\| 
        \leq 2\sqrt{\rho}\|\f{g}_k\| 
        \leq \frac{\sigma^2}{2(\LH + 6\sigma^2)}\|\f{g}_k\|,
    \end{align*}
    where the last inequality uses the upper bound on $\rho$. 
    
    Now, if $\f{s}_k^{(\T)}$ is not $\rho$-sufficient, then \cref{lem:obj_descents} yields \cref{eq:lem:reg:sol:1}. 
    Otherwise, $\f{s}_k^{(\T)}$ is $\rho$-sufficient, and applying \cref{lem:sufficient_inexactness} $q_1 = \LH\sigma^2 / (12(\LH + 6\sigma^2)^2)$ and $q_2 = \sigma^2/(2(\LH + 6\sigma^2))$ gives 
    \cref{eq:lem:reg:sol:2,eq:lem:reg:sol:3}.
\end{proof}

\begin{lemma}[\texttt{SUF}-type Reduction]\label{lem:reg:fth}
    Suppose $f$ is convex and \cref{assmp:LH,assmp:boundedness} hold. 
    Let $0 < \rho < 1/2$, $\theta > 0$, $\sigma > 0$, and 
    $\T \geq \lceil \theta / \min\{1, \sqrt{\|\f{g}_k\|}\} \rceil$. 
    With FNCR-reg-LS (\cref{alg:fncr-ls} using $\f{H}_k$ from \cref{eq:H_reg}), if \cref{alg:cr_with_sd_si} returns \texttt{dtype} = \texttt{SUF}, 
    then there exists $0 \leq H_0 < \infty$ (depending on $\f{x}_0$) such that
    \begin{align}\label{eq:lem:reg:fth}
        f_{k+1} < f_k - \frac{\theta \rho}{H_0 + \sigma}\|\f{g}_k\|^{3/2}.
    \end{align}
\end{lemma}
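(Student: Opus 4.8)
The plan is to read off, from the control flow of \cref{alg:cr_with_sd_si}, what a \texttt{SUF}-type return means for FNCR-reg-LS, and then feed the $\beta_t$-sufficiency inequality into a telescoped lower bound on $-\lr{\f g_k,\f s_k^{(t)}}$ in which the residual norms conveniently cancel against the denominator of $\beta_t$.

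First I would establish the structural claim: if \cref{alg:cr_with_sd_si} returns \texttt{dtype} $=$ \texttt{SUF}, then $\f d_k=\f s_k^{(j)}$ for some $j\ge\T$ with $\f s_k^{(j)}$ being $\beta_j$-sufficient, where $\beta_j=\beta\|\f g_k\|^2/\|\f r_k^{(j-1)}\|^2$. A return at line 7 or line 11 is reached only after the while-condition ``$t<\T$ or $\f s_t$ is $\beta_t$-sufficient'' has been entered with $t\ge\T$ (line 7 because $t<\T$ would route to line 5, line 11 because $\TM\ge\T$), so $\f s_k^{(t)}$ passed the $\beta_t$-sufficiency test and $j=t\ge\T$; a return at line 24 occurs only after the loop exits with $t>\T$ (it cannot exit while $t<\T$, and $t=\T$ routes to line 22), and $\f s_k^{(t-1)}$ passed the test on the previous pass, so $j=t-1\ge\T$. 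In each branch $\|\f r_k^{(j-1)}\|>0$ — otherwise iteration $j$ would never have been reached — so $\beta_j$ is well defined, and $j\le g$, the grade of $\f g_k$ with respect to $\f H_k$ (once $\f r_k^{(g)}=\f 0$ the solver returns at line 7). Since $f$ is convex and $\|\f g_k\|>\varepsilon>0$, the regularized Hessian $\f H_k=\He(\f x_k)+\sigma\sqrt{\|\f g_k\|}\,\f I$ is positive definite, so \cref{eq:stp1g>stg,eq:rt>rtp1,eq:lem:gst<gstm1-lr_weaker} all apply.

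Next, using \cref{cond:sufficient_reduction_condition} with the linear model and $c=\beta_j$, together with the telescoped form of \cref{eq:lem:gst<gstm1-lr_weaker} over $t=1,\dots,j$, residual monotonicity \cref{eq:rt>rtp1} (hence $\|\f r_k^{(i)}\|\ge\|\f r_k^{(j-1)}\|$ for $0\le i\le j-1$), and $\f s_k^{(0)}=\f 0$, I would obtain
\begin{align*}
    f_k-f_{k+1}\ \ge\ \beta_j\bigl|\lr{\f g_k,\f s_k^{(j)}}\bigr|\ \ge\ \beta\,\frac{\|\f g_k\|^2}{\|\f r_k^{(j-1)}\|^2}\cdot\frac{1}{\lambda_{\max}}\sum_{i=0}^{j-1}\|\f r_k^{(i)}\|^2\ \ge\ \beta\,\frac{\|\f g_k\|^2}{\|\f r_k^{(j-1)}\|^2}\cdot\frac{j\,\|\f r_k^{(j-1)}\|^2}{\lambda_{\max}}\ =\ \frac{\beta j}{\lambda_{\max}}\|\f g_k\|^2,
\end{align*}
where $\lambda_{\max}=\lambda_{\max}(\f H_k)$ and the two copies of $\|\f r_k^{(j-1)}\|^2$ cancel. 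To finish, I would note that the iterates stay in $\F$ (each of \cref{lem:reg:sol,lem:reg:unf} and the present lemma yields $f_{k+1}<f_k$, so $\f x_k\in\F$ by induction), so compactness of $\F$ and continuity furnish a finite $H_0\ge 0$ (depending on $\f x_0$) with $\lambda_{\max}(\f H_k)=\lambda_{\max}(\He(\f x_k))+\sigma\sqrt{\|\f g_k\|}\le H_0+\sigma$; and since $\|\f g_k\|>\varepsilon$ we have $j\ge\T\ge\lceil\theta/\sqrt\varepsilon\rceil\ge\theta/\sqrt\varepsilon>\theta/\sqrt{\|\f g_k\|}$, whence $f_k-f_{k+1}\ge\frac{\beta j}{\lambda_{\max}}\|\f g_k\|^2>\frac{\theta\beta}{H_0+\sigma}\|\f g_k\|^{3/2}$, which is \cref{eq:lem:reg:fth}.

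The main obstacle I anticipate is not analytic but bookkeeping: certifying in the first step that a \texttt{SUF} return always delivers an index $j\ge\T$ together with $\beta_j$-sufficiency across all three branches, and clearing the small edge cases ($\|\f r_k^{(j-1)}\|>0$, $j\le g$, $\f H_k\succ\f 0$, $\f x_k\in\F$). Once that is in place the rest is the short three-line estimate above; the only point needing a word of care is invoking compactness of $\F$ (rather than a global Hessian bound, which is unavailable under \cref{assmp:LH} alone) to produce $H_0$. The adaptive threshold $\beta_t=\beta\|\f g\|^2/\|\f r_k^{(t-1)}\|^2$ is designed precisely so that its denominator cancels the $\|\f r_k^{(j-1)}\|^2$ coming out of the telescoped $\lr{\f g_k,\f s_k^{(j)}}$ bound, converting the sufficient-iteration guarantee $j\ge\T\gtrsim 1/\sqrt\varepsilon$ into the $\mathcal{O}(\|\f g_k\|^{3/2})$ per-step decrease.
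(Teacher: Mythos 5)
Your proposal is correct and follows essentially the same route as the paper: exploit $\beta_j$-sufficiency at an index $j\ge\T$, telescope \cref{eq:lem:gst<gstm1-lr_weaker} with residual monotonicity so that $\|\f{r}_k^{(j-1)}\|^2$ cancels against the denominator of $\beta_j$, and then convert $\T\ge\lceil\theta/\sqrt{\varepsilon}\rceil$ and $\|\f{g}_k\|>\varepsilon$ into the $\|\f{g}_k\|^{3/2}$ decrease. The only cosmetic differences are that you telescope in one shot rather than unrolling step by step, and you absorb the bounded term $\sigma\sqrt{\|\f{g}_k\|}$ (via compactness of $\F$) into the constant $H_0$ so that $\lambda_{\max}(\f{H}_k)\le H_0+\sigma$, which avoids the paper's final case split between $\|\f{g}_k\|\ge 1$ and $\|\f{g}_k\|<1$; this is legitimate since the lemma permits $H_0$ to depend on $\f{x}_0$ and the algorithm's parameters.
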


\begin{proof}
    By \cref{assmp:boundedness}, there exists $H_0 > 0$ (depending on $\f{x}_0$) such that, for all $\f{x} \in \F$,  
    \[
        \|\f{H}(\f{x})\| < H_0 + \sigma \sqrt{\|\f{g}(\f{x})\|}.
    \]
    Since FNCR-reg-LS is a descent algorithm, we always have $\f{x}_k \in \F$.  
    Moreover, \texttt{dtype} = \texttt{SUF} implies $t \geq \T$ and $\f{s}_k^{(t)}$ is $\rho_t$-sufficient.  
    Using the monotonicity of $\rho_t$ and $\langle \f{g}_k, \f{s}_k^{(t)}\rangle$ from 
    \cref{eq:rt>rtp1,eq:stp1g>stg,eq:beta_t}, we obtain
    \begin{align*}
        f_{k+1} - f_k  \leq \rho_t\lr{\f{g}_k,\f{s}_k^{(t)}} &\leq \rho_\T\lr{\f{g}_k, \f{s}_k^{(\T)}} \\
        &\op{\cref{eq:beta_t},\cref{eq:lem:gst<gstm1-lr_weaker}}{\leq} \rho\frac{\|\f{g}_{k}\|^2}{\|\f{r}_k^{(\T-1)}\|^2}\left(\lr{\f{g}_{k},\f{s}_k^{(\T-1)}} - \frac{\|\f{r}_k^{(\T-1)}\|^2}{H_0 + \sigma\sqrt{\|\f{g}_k\|}}\right)\\
        & \op{\cref{eq:rt>rtp1}}{<} \rho\|\f{g}_{k}\|^2\left(\frac{\lr{\f{g}_{k},\f{s}_k^{(\T-1)}}}{\|\f{r}_k^{(\T-2)}\|^2} - \frac{1}{H_0 + \sigma\sqrt{\|\f{g}_k\|}}\right) \\
        &\op{\cref{eq:beta_t}}{=} \rho_{T-1}\lr{\f{g}_{k},\f{s}_k^{(\T-1)}} - \frac{\rho\|\f{g}_{k}\|^2}{H_0 + \sigma\sqrt{\|\f{g}_k\|}} \\
        & < \cdots < \rho\|\f{g}_{k}\|^2\left(\frac{\lr{\f{g}_{k},\f{s}_k^{(1)}}}{\|\f{r}_k^{(0)}\|^2} - \frac{\T-1}{H_0 + \sigma\sqrt{\|\f{g}_k\|}}\right)\\
        & = \rho\|\f{g}_{k}\|^2\left(-\alpha_0 - \frac{\T - 1}{H_0 + \sigma\sqrt{\|\f{g}_k\|}}\right) \op{\cref{eq:lemma:alpha_ineq}}{\leq} - \frac{\T \rho \|\f{g}_{k}\|^2}{H_0 +\sigma\sqrt{\|\f{g}_k\|}}.
    \end{align*}
    Now, if $\|\f{g}_k\| \geq 1$, then $\T \geq \lceil\theta\rceil$ and
    \begin{align*}
        f_{k+1} - f_k & < - \frac{\T \rho \|\f{g}_{k}\|^2}{H_0 +\sigma\sqrt{\|\f{g}_k\|}} \leq - \frac{\theta\rho}{H_0/\sqrt{\|\f{g}_k\|} +\sigma}\|\f{g}_{k}\|^{3/2} \leq - \frac{\theta\rho}{H_0 +\sigma}\|\f{g}_{k}\|^{3/2}.
    \end{align*}
    Otherwise, if $\|\f{g}_k\| < 1$, then $\T \geq \lceil \theta/\sqrt{\|\f{g}_k\|}\rceil$, which gives
    \[
        f_{k+1} - f_k 
        < - \frac{\T \rho \|\f{g}_k\|^2}{H_0 + \sigma\sqrt{\|\f{g}_k\|}}
        \leq - \frac{\theta\rho}{H_0 + \sigma}\|\f{g}_k\|^{3/2}.
    \]
    Thus, in both cases, the claimed bound \cref{eq:lem:reg:fth} holds.
\end{proof}


\begin{theorem}[Global Convergence of FNCR-reg-LS]\label{thm:reg_fncr}
    Suppose $f$ is convex and \cref{assmp:LH,assmp:boundedness} hold. 
    Let $\sigma > 0$, $0 < \rho < \min\{1/2, \, \sigma^4/(16(\LH + 6\sigma^2)^2)\}$, 
    $0 \leq \omega < \sigma^2/(\LH + 6\sigma^2)$, and $\theta > 0$. 
    Further, let $\lceil \theta / \min\{1,\sqrt{\|\f{g}_k\|}\}\rceil \leq \T \leq \TM \leq d$. 
    Then FNCR-reg-LS (\cref{alg:fncr-ls} using $\f{H}_k$ from \cref{eq:H_reg}) has a worst-case iteration complexity $\mathcal{O}(1/\sqrt{\varepsilon})$.
\end{theorem}

\begin{proof}
    Recall that we denote the function value gap by $\delta_k \triangleq f_k - f^*$. 
    For the iteration $k$ of FNCR-reg-LS, define 
    \begin{align*}
        \mathcal{C}_k \triangleq \{i \leq k \,|\, \text{iteration $i$ gives the relation } \cref{eq:lem:reg:sol:3}\},
    \end{align*}
    with $\mathcal{C}_{k}^{\complement}$ denoting its complement. 
    We analyze separately the cases $|\mathcal{C}_k| > k/2$ and $|\mathcal{C}_k| \leq k/2$.
    \bigskip

    \noindent \textbf{Case 1:} $|\mathcal{C}_k| > k/2$.  
    Recall the update direction is $\f{d}_k = \eta\f{s}_k$, where $0 < \eta \leq 1$ and $\f{s}_k$ is returned by \cref{alg:cr_with_sd_si}. We have
    \begin{align*}
        \|\f{g}_{k+1} - \f{g}_k - \f{H}_k\f{d}_k + \sigma\sqrt{\|\f{g}_k\|}\f{d}_k\| 
        &= \|\f{g}_{k+1} - \f{g}_k - \He(\f{x}_k)\f{d}_k\| \op{\cref{eq:assmp:LH:gkp1<p}}{\leq} \frac{\LH}{2}\|\f{d}_k\|^2,
    \end{align*}
    which implies
    \begin{align*}
        \|\f{g}_{k+1}\| 
        & \leq \frac{\LH}{2}\|\f{d}_k\|^2 + \|\f{g}_k\| + \|\f{H}_k\f{d}_k\| + \sigma\sqrt{\|\f{g}_k\|}\|\f{d}_k\| \\
        & \op{\eta \leq 1}{\leq} \frac{\LH}{2}\|\f{s}_k\|^2 + \|\f{g}_k\| + \|\f{H}_k\f{s}_k\| + \sigma\sqrt{\|\f{g}_k\|}\|\f{s}_k\| \\
        & \op{\cref{eq:stp1>st},\cref{eq:Hstp1>Hst}}{\leq} \frac{\LH}{2\sigma^2}\|\f{g}_k\| + 3\|\f{g}_k\| 
        = \frac{\LH + 6\sigma^2}{2\sigma^2}\|\f{g}_k\|,
    \end{align*}
    where we have used the fact that $$ \|\f{s}_k\|\leq\|(\nabla^2 f(\mathbf{x}) + \sigma\sqrt{\|\mathbf{g}_k\|}\mathbf{I})^{-1}\mathbf{g}_k\| \leq \|\mathbf{g}_k\|/(\sigma\sqrt{\|\mathbf{g}_k\|}).$$
    Define $L \triangleq (\LH + 6\sigma^2)/(2\sigma^2) \geq 1$.  
    Then for each $i \leq k$, if $i \in \mathcal{C}_k$, we have $\|\f{g}_{i+1}\| \leq \|\f{g}_i\|/(2L)$; otherwise, $\|\f{g}_{i+1}\| \leq L\|\f{g}_i\|$.  
    Hence,
    \begin{align}
        \label{eq:case_1}
        \delta_k / D &\op{\cref{eq:assmp:boundedess_convexity}}{\leq} \|\f{g}_{k}\|  \leq  \frac{L^{|\mathcal{C}_{k}^{\complement}|}}{(2L)^{|\mathcal{C}_k|}} \|\f{g}_0\| = \frac{L^{k - |\mathcal{C}_k|}}{(2L)^{|\mathcal{C}_k|}} \|\f{g}_0\| = \frac{L^{k - 2|\mathcal{C}_k|}}{2^{|\mathcal{C}_k|}} \|\f{g}_0\| \leq \frac{1}{2^{k/2}}\|\f{g}_0\|. 
    \end{align}

    \noindent \textbf{Case 2:} $|\mathcal{C}_k| \leq k/2$.  
    Then $|\mathcal{C}_{k}^{\complement}| > k/2$.  
    By \cref{lem:reg:fth,lem:reg:sol,lem:reg:unf}, for each $i \in \mathcal{C}_{k}^{\complement}$, we obtain
    \[
        f_{i+1} < f_i - C\|\f{g}_i\|^{3/2},
    \]
    where
    \[
        C \triangleq \rho\sigma^3 \min\!\left\{\zeta\left(\frac{3(1-2\rho)}{\LH}\right)^2, \ \frac{1}{6(\LH + 6\sigma^2)^2}, \ \frac{\zeta}{\rho}\left(\frac{3}{4\LH}\right)^2, \ \frac{\theta}{(H_0 + \sigma)\sigma^3}\right\}.
    \]
    Now, following \cite{nesterov2006cubic}, we have
     \begin{align*}
        \frac{1}{\sqrt{\delta_{i+1}}} - \frac{1}{\sqrt{\delta_{i}}} & = \frac{\sqrt{\delta_i} - \sqrt{\delta_{i+1}}}{\sqrt{\delta_{i+1}\delta_i}}  = \frac{\delta_i - \delta_{i+1}}{\delta_i\sqrt{\delta_{i+1}} + \delta_{i+1}\sqrt{\delta_i}}\\
        & \op{\delta_{i} \, \geq \, \delta_{i+1}}{\geq} \frac{\delta_i - \delta_{i+1}}{2\delta_i^{3/2}} \op{\cref{eq:assmp:boundedess_convexity}}{\geq} \frac{\delta_i - \delta_{i+1}}{2D^{3/2}\|\f{g}_i\|^{3/2}} \geq \frac{C}{2D^{3/2}}.
    \end{align*}
    Summing across all $i \in \mathcal{C}_{k}^{\complement}$, and using monotonicity of $\delta_i$, gives
    \[
        \frac{1}{\sqrt{\delta_k}} - \frac{1}{\sqrt{\delta_0}} > \frac{C}{2D^{3/2}}|\mathcal{C}_k^{\complement}| > \frac{Ck}{4D^{3/2}},
    \]
    which implies
    \begin{align}
        \label{eq:case_2}
        \delta_k < \left(\frac{CD^{-3/2}k}{4} + \sqrt{\delta_0}\right)^{-2}.
    \end{align}
    Combining \cref{eq:case_1,eq:case_2}, gives the desired result.  
\end{proof}

A special case of \cref{thm:reg_fncr} coincides with some recent results in the literature. Suppose we let $\omega = 0$ and $\theta = \sqrt{\|\f{g}_k\|}d$, which implies $\T = d$. From a basic property of Krylov subspace methods, the solution $\f{s}_k = -\f{H}_k^{-1}\f{g}_k$ will then be generated by \cref{alg:cr_with_sd_si}.  
From \cref{lem:descent_eta_region}, we can show that as long as $0 < \rho \leq 1/2 - \LH/(6\sigma^2)$, with $\sigma > \sqrt{\LH/3}$, the direction $\f{s}_k = -\f{H}_k^{-1}\f{g}_k$ is guaranteed to be $\rho$-sufficient. Indeed, suppose $\f{s}_k$ is not $\rho$-sufficient. By \cref{lem:descent_eta_region} with $h = \sigma\sqrt{\|\f{g}_k\|}$, if $\f{s}_k$ fails to be $\rho$-sufficient, then by \textit{modus tollens} we must have
\begin{align*}
    \sqrt{\frac{3(1 - 2\rho)}{\LH}}\,\frac{\sigma^{3/4}\|\f{g}_k\|^{3/8}}{|\lr{\f{g}_k,\f{s}_k}|^{1/4}} < 1,
\end{align*} 
which implies
\begin{align*}
    \left(\frac{3(1 - 2\rho)}{\LH}\right)^2\sigma^3\|\f{g}_k\|^{3/2} \leq \lr{\f{g}_k,\f{H}_k^{-1}\f{g}_k} 
    & < \frac{1}{\sigma\sqrt{\|\f{g}_k\|}}\|\f{g}_k\|^2.
\end{align*}
Hence $\left({3(1 - 2\rho)}/{\LH}\right)^2\sigma^4 < 1$, and therefore $\rho > 1/2 - \LH/(6\sigma^2)$, contradicting the premise. Thus, $\f{s}_k = -\f{H}_k^{-1}\f{g}_k$ must be $\rho$-sufficient, meaning one can ensure sufficient descent in the function value without performing any backtracking line-search operation. In particular, by setting $\sigma = \sqrt{\LH/2}$, the direction $\f{s}_k = -\f{H}_k^{-1}\f{g}_k$ is always $\rho$-sufficient for any $0 < \rho \leq 1/6$.  
Furthermore, note that
\begin{align*}
    \frac{2\|\f{g}_k\|}{\LH} = \frac{1}{(\sqrt{\LH\|\f{g}_k\|/2})^2}\|\f{g}_k\|^2 \leq \|\f{H}_k^{-1}\f{g}_k\|^2 = \|\f{s}_k\|^2.
\end{align*}
Therefore, to obtain the convergence rate for this specific case, we can appeal directly to \cref{lem:sufficient_inexactness}, setting $q_1 = 1$ and $q_2 = 0$, which yields the following result.

\begin{corollary}\label{cor:known_LH}
    Suppose $f$ is convex and \cref{assmp:LH,assmp:boundedness} hold. Let $0 < \rho \leq 1/6$, $\sigma = \sqrt{\LH/2}$, and $\TM = \T = d$. Then every iteration of FNCR-reg-LS (\cref{alg:fncr-ls} using $\f{H}_k$ from \cref{eq:H_reg}) satisfies
    \begin{align*}
        f_{k+1} < f_k - \frac{2\rho}{\sqrt{2\LH}} \|\f{g}_k\|^{3/2}.
    \end{align*}
    Furthermore, the overall worst-case iteration and operation complexities are $\mathcal{O}(1/\sqrt{\varepsilon})$ and $\mathcal{O}(d/\sqrt{\varepsilon})$.
\end{corollary}

Under the setting specified in \cref{cor:known_LH}, FNCR-reg-LS reduces to the method described in \cite[Algorithm~1]{mishchenko2023regularized}. In fact, \cite[Theorem~1]{mishchenko2023regularized} is a special case of \cref{cor:known_LH} with $\rho = \sqrt{2}/192$.

\section{Numerical Experiments}\label{sec:num_exp}
To evaluate the performance of our proposed methods, we consider the regularized cross-entropy loss function
\begin{align}\label{eq:cross_entropy_f}
    f(\f{x}) 
    &= \sum_{i = 1}^N \sum_{j = 1}^{C} -\delta(j,b_i)\log\left(\frac{\exp(\lr{\f{a}_i,\f{x}_j})}{\sum_{m=1}^C \exp(\lr{\f{a}_i,\f{x}_m)}}\right) + \mu\|\f{x}\|^2,
\end{align}
where $\{(\f{a}_i,b_i)\}_{i=1}^N \subset \mathbb{R}^d \times [C]$ denotes the input-output pairs, with $b_i$ being the class label from $C$ classes corresponding to input $\f{a}_i$; $\f{x} \triangleq [\f{x}_1^\top, \f{x}_2^\top, \dots, \f{x}_C^\top]^\top \in \mathbb{R}^{dC}$ is the parameter vector; $\delta(j,b_i) = 1$ if $b_i = j$ and $\delta(j,b_i) = 0$ otherwise; and $\mu \geq 0$ is the regularization parameter.  
Note that the problem is strongly convex when $\mu > 0$, and convex (but not strongly convex) when $\mu = 0$.

We will evaluate performance using four datasets: CIFAR10, CIFAR100~\cite{krizhevsky2009learning}, Covertype~\cite{covertype31}, and the Describable Textures Dataset (DTD)~\cite{cimpoi14describing}.  
The cross-entropy loss function is minimized using seven different methods: FNCR-LS (\cref{alg:fncr-ls} with \cref{alg:cr_with_sd_si} as subroutine), FNCR-reg-LS (\cref{alg:fncr-ls} with \cref{alg:cr_with_sd_si} subroutine, 
and the gradient-regularized Hessian, $\f{H}_k$, from \cref{eq:H_reg}), NewtonCG~\cite{nocedal2006numerical}, gradient-regularized Newton’s method with line-search~\cite{mishchenko2023regularized,doikov2024gradient,doikov2024super}, Steihaug’s trust-region method~\cite{steihaug1983conjugate}, L-BFGS~\cite{liu1989limited}, and gradient descent with line-search.  
These methods will be abbreviated as \texttt{FNCR-LS}, \texttt{FNCR-reg-LS}, \texttt{NewtonCG}, \texttt{GradReg}, \texttt{TR}, \texttt{L-BFGS}, and \texttt{GD}, respectively, in the plots. For each experiment, we present three plots: $f(\f{x})$ versus iteration, $f(\f{x})$ versus oracle calls,\footnote{To provide an implementation- and system-independent measure of complexity, we report results in terms of \emph{oracle calls}, where each function, gradient, or Hessian-vector product evaluation is expressed in units equivalent to a single function evaluation. Specifically, a gradient evaluation is counted as one additional function evaluation, and a Hessian-vector product as two additional evaluations~\cite{pearlmutterFastExactMultiplication1994}. This provides a fairer comparison across methods with varying per-iteration costs than wall-clock time, which can vary significantly with platform and implementation details.} and $f(\f{x})$ versus wall-clock time.  
The performance of these seven methods will be compared based on these plots. For both FNCR-reg-LS and FNCR-LS, we also track the number of \texttt{INS}-type directions used in each experiment, with the star symbol ($\star$) indicating the use of a \texttt{INS}-type direction at that iteration.

\begin{remark}
Before detailing the hyperparameter settings, we highlight a practical consideration regarding the implementation of \cref{alg:cr_with_sd_si} and its relation to \cref{thm:reg_fncr}. While theoretically clean, checking for sufficient descent and iteration conditions at every step, as in \cref{alg:cr_with_sd_si}, is inefficient in practice. For example, it is wasteful to verify that $\f{s}_0 = \f{0}$ is trivially sufficient, and inefficient to check $\rho_t$-sufficiency at every iteration $t \geq \T$.  
In our experiments with \texttt{FNCR-reg-LS} and \texttt{FNCR-LS}, we therefore perform sufficiency checks only every 20\textsuperscript{th} iteration after $\T$, i.e., at $t = \T + 20m$ for $m \geq 0$. For each successful check, the reduction $f(\f{x}_k) - f(\f{x}_k + \f{s}_t)$ is recorded, and the direction that yields the greatest reduction is stored. If insufficiency is detected at $m > 0$, a binary search is conducted within the interval $\T + 20(m-1) < t < \T + 20m$ to select the direction $\f{d}_k$ that provides the most reduction.
\end{remark}

The hyperparameters for each method are configured as follows:
\begin{itemize}
    \item \texttt{FNCR-reg-LS}: The regularization, sufficient descent, and inexactness parameters are set to $\sigma = 0.01$, $\rho = 0.01$, and $\omega = 0$, respectively. We set $\T = 5$ and $\TM = 1000$.
    
    \item \texttt{FNCR-LS}: Identical to \texttt{FNCR-reg-LS}, except with $\sigma = 0$.
    
    \item \texttt{GradReg}: Following the notation in \cite[Algorithm 2]{mishchenko2023regularized}, we set $H_0 = 0.01$.
    
    \item \texttt{L-BFGS}: The limited memory size is set to $20$, and the Strong Wolfe line-search parameter to $0.9$.
    
    \item \texttt{NewtonCG}: The inexactness parameter is set to $\omega = 0.1$, i.e., $\|\f{r}_t\| \leq \omega\|\f{g}_k\|$, where the Newton's system is solved using the CG method.
    
    \item \texttt{TR}: Using the notation and setup from \cite[Algorithm 4.1]{nocedal2006numerical}, we set $\Delta_0 = 10$, $\hat{\Delta} = 10^2$, and $\eta = 0.05$. The trust-region subproblem is approximately solved via the CG-Steihaug method \cite[Algorithm 7.2]{nocedal2006numerical}, with the inexactness parameter $\omega = 0.1$.
\end{itemize}

Unless stated otherwise, methods that use a backtracking line-search routine will have their parameters set to $\rho = 10^{-4}$, $\zeta = 0.5$, and initial step size $\eta_0 = 1$, except for GD where we set $\eta_0 = 0.01$. Initial weights $\f{x}_0$ are sampled from the uniform distribution $U[0, 1]$. Each method is terminated when the approximate first-order optimality condition $\|\f{g}_k\| \leq 10^{-6}$ is met, or the total number of oracle calls exceeds $10^5$.

We divide the four experiments into two regimes. The first regime includes the CIFAR10 (\cref{fig:cifar10}) and Covertype (\cref{fig:covtype}) datasets, where $N \gg dC$ and $\mu = 0.1$. Specifically, CIFAR10 has $N = 60{,}000$ and $dC = 30{,}720$, while Covertype has $N = 581{,}012$ and $dC = 378$. In this under-parameterized regime, \cref{eq:cross_entropy_f} is strongly convex.
The second regime includes the CIFAR100 (\cref{fig:cifar100}) and DTD (\cref{fig:dtd}) datasets, where $dC \gg N$ and $\mu = 0$. For CIFAR100, $N = 10{,}000$ and $dC = 307{,}200$, and for DTD, $N = 3{,}760$ and $dC = 3{,}172{,}500$. Here, \cref{eq:cross_entropy_f} corresponds to a convex, but not strongly convex, over-parameterized model.

\begin{figure}[htbp]
    \centering
    \includegraphics[width=1\linewidth]{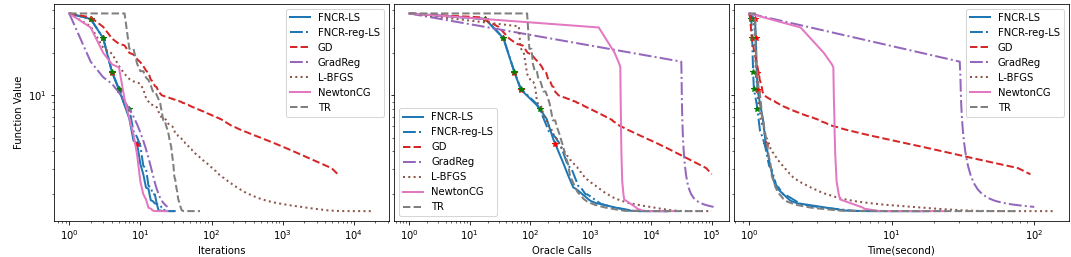}
    \caption{For CIFAR10, \texttt{FNCR-reg-LS}, \texttt{FNCR-LS}, \texttt{TR}, and \texttt{NewtonCG} terminate upon reaching the approximate first-order optimality condition. In contrast, \texttt{GradReg}, \texttt{L-BFGS}, and \texttt{GD} terminate after exceeding the maximum number of oracle calls. In this experiment, only 5 \texttt{INS}-type directions are used by both \texttt{FNCR-reg-LS} and \texttt{FNCR-LS}.}
    \label{fig:cifar10}
\end{figure}

\begin{figure}[htbp]
    \centering
    \includegraphics[width=1\linewidth]{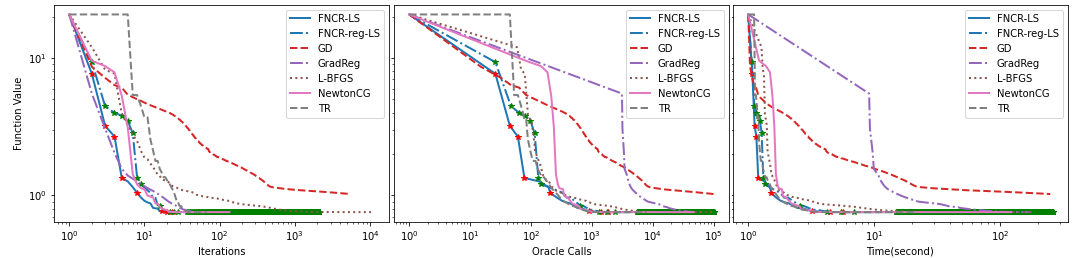}
    \caption{For Covertype, \texttt{FNCR-LS}, \texttt{GradReg}, and \texttt{NewtonCG} terminate upon achieving the approximate first-order optimality condition. In contrast, \texttt{FNCR-reg-LS}, \texttt{L-BFGS}, \texttt{TR}, and \texttt{GD} terminate after exceeding the maximum number of oracle calls. In this experiment, \texttt{FNCR-LS} uses 8 \texttt{INS}-type directions. However, when the gradient norm is approximately $1.01 \times 10^{-6}$, \texttt{FNCR-reg-LS} begins to use only \texttt{INS}-type directions with a very small step size, $\eta \approx 9.31 \times 10^{-10}$.}
    \label{fig:covtype}
\end{figure}

\begin{figure}[htbp]
    \centering
    \includegraphics[width=1\linewidth]{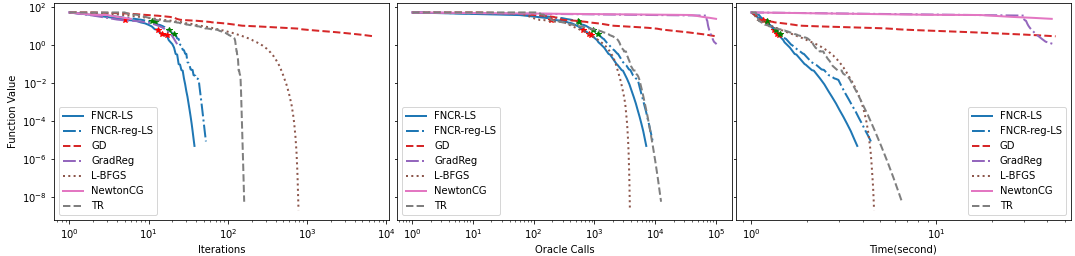}
    \caption{For CIFAR100, \texttt{FNCR-reg-LS}, \texttt{FNCR-LS}, \texttt{TR}, and \texttt{L-BFGS} terminate upon achieving the approximate first-order optimality condition. In contrast, \texttt{NewtonCG}, \texttt{GradReg}, and \texttt{GD} terminate after exceeding the maximum number of oracle calls. In this experiment, \texttt{FNCR-LS} uses three \texttt{INS}-type directions, while \texttt{FNCR-reg-LS} uses four.}
    \label{fig:cifar100}
\end{figure}

\begin{figure}[htbp]
    \centering
    \includegraphics[width=1\linewidth]{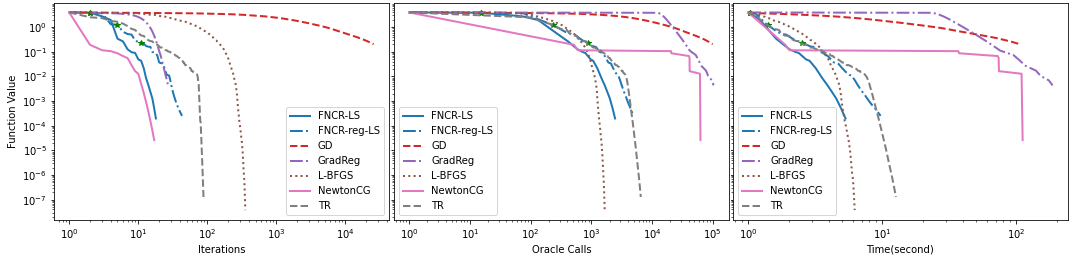}
    \caption{For DTD, \texttt{FNCR-reg-LS}, \texttt{FNCR-LS}, \texttt{NewtonCG}, \texttt{TR}, and \texttt{L-BFGS} terminate upon achieving the approximate first-order optimality condition. In contrast, \texttt{GradReg} and \texttt{GD} terminate after exceeding the maximum number of oracle calls. In this experiment, \texttt{FNCR-LS} uses one \texttt{INS}-type direction, while \texttt{FNCR-reg-LS} uses three.}
    \label{fig:dtd}
\end{figure}

As seen in \cref{fig:cifar10,fig:cifar100,fig:covtype,fig:dtd}, both \texttt{FNCR-LS} and \texttt{FNCR-reg-LS} perform competitively against several popular second-order methods in terms of oracle calls and runtime. To better understand the performance of \texttt{FNCR-LS} and \texttt{FNCR-reg-LS}, we compare them to \texttt{NewtonCG}. \texttt{NewtonCG} attempts to solve the linear system until the residual satisfies $\|\f{r}_k^{(t)}\| \leq 0.1\|\f{g}_k\|$, followed by a backtracking line-search. However, at iterates where the Hessian $\f{H}_k$ is ill-conditioned, \texttt{NewtonCG} may require many oracle calls to solve the linear system to the desired accuracy, resulting in high computational cost.
In contrast, both \texttt{FNCR-LS} and \texttt{FNCR-reg-LS} reduce this overhead by continuously monitoring the quality of the inexact Newton step $\f{s}_k^{(t)}$ based on its effect on the function values during the linear system solve. If $\f{s}_k^{(t)}$ fails to meet the $\rho_t$-sufficiency condition, \cref{alg:cr_with_sd_si} terminates early, allowing the outer solver to move on to the next iterate. This adaptive behavior is particularly efficient when $\f{H}_k$ is ill-conditioned or when the quadratic model \cref{eq:quadratic} is a poor local approximation at the current point. In such cases, as the Krylov subspace dimension increases, $\f{s}_k^{(t)}$ can quickly fail the $\rho_t$-sufficiency test, preventing unnecessary oracle calls and saving computation time.

The underperformance of \texttt{GradReg} may be explained similarly. The issue is more pronounced because, for every iteration of backtracking, \texttt{GradReg} needs to re-solve the linear system, making the algorithm extremely costly in practice, especially in high-dimensional problems.

Finally, across all experiments we observe that \texttt{FNCR-reg-LS} consistently performs slightly worse than \texttt{FNCR-LS}. This observation is consistent with the findings in \cite{lim2025complexity}, where it was shown that regularized methods tend to underperform compared with their non-regularized counterparts.

\subsection{Comparison with Accelerated Newton-Type Methods}\label{sec:exp:recent}
In this section, we compare our methods with several recently proposed advanced Newton-type variants that involve more complex subproblems, often nonlinear in nature. These include cubic regularized Newton with line-search (\texttt{CRN})~\cite{nesterov2006cubic}, accelerated cubic regularized Newton (\texttt{AccCRN})~\cite{nesterov2008accelerating}, accelerated cubic regularized Newton with adaptation (\texttt{NATA})~\cite[Algorithm~4]{kamzolov2024optami}, and optimal Monteiro--Svaiter acceleration with cubic regularization (\texttt{OptMS})~\cite[Algorithm~1 with Algorithm~3]{carmon2022optimal}. A common characteristic of these methods is the non-trivial nature of their subproblems, which are either nonlinear or require solving multiple linear systems, making them computationally demanding. To better illustrate the high cost of solving such subproblems, we evaluate these methods only on the DTD dataset, which is high-dimensional and thus particularly sensitive to expensive subproblem solves. Their performance is then compared with that of \texttt{FNCR-LS}, \texttt{FNCR-reg-LS}, and \texttt{GD}, where \texttt{GD} serves as a baseline.

Implementing \texttt{AccCRN} and \texttt{NATA} presents several technical challenges, particularly in selecting an appropriate upper bound $M$ such that $\LH \leq M$. Overestimating $M$ can slow convergence, while underestimating it may cause divergence. In our experiments, we approximate $\LH$ via a cold run of \texttt{CRN} with line-search, extracting the constant $M > 0$ that satisfies
\begin{align*}
    f(\f{x}_k + \f{d}_k) &\leq f(\f{x}_k) + \langle \f{g}_k, \f{d}_k \rangle + \frac{1}{2} \langle \f{d}_k, \f{H}_k \f{d}_k \rangle + \frac{M}{6} \|\f{d}_k\|^3,
\end{align*}
where $\f{H}_k = \He(\f{x}_k)$ and
\begin{align}\label{eq:cubic_subproblem}
    \f{d}_k \triangleq \arg\min_{\f{d} \in \Rd} f(\f{x}_k) + \lr{\f{g}_k,\f{d}} + \frac{1}{2}\lr{\f{d},\f{H}_k\f{d}} + \frac{M}{6}\|\f{d}\|^3.
\end{align}
This estimated $M$ is then used in \texttt{AccCRN} and \texttt{NATA}, while the \texttt{CRN} results in \cref{fig:dtd_acc} correspond to a warm run using this $M$.

Both \texttt{AccCRN} and \texttt{NATA} require solving two nonlinear subproblems (cubic and auxiliary). In high-dimensional settings, direct solvers are infeasible because only Hessian-vector products $\f{H}_k \f{d}$ are available. Following \cite{carmon2019gradient}, we solve these subproblems using gradient descent, up to a first-order suboptimality of $10^{-9}$.

The hyperparameters are set as follows:
\begin{itemize}
    \item \texttt{CRN}, \texttt{AccCRN}: $M = 1.5 \times 10^3$
    \item \texttt{NATA}: $\nu_{\min} = 10^{-6}$, $\nu_{\max} = 10$, $\theta = 2$, $M = 1.5 \times 10^3$
    \item \texttt{OptMS}: $\lambda_0' = 1.1$, $\sigma = 0.5$, \texttt{LAZY} = True
\end{itemize}

\begin{figure}[htbp]
    \centering
    \includegraphics[width=1\linewidth]{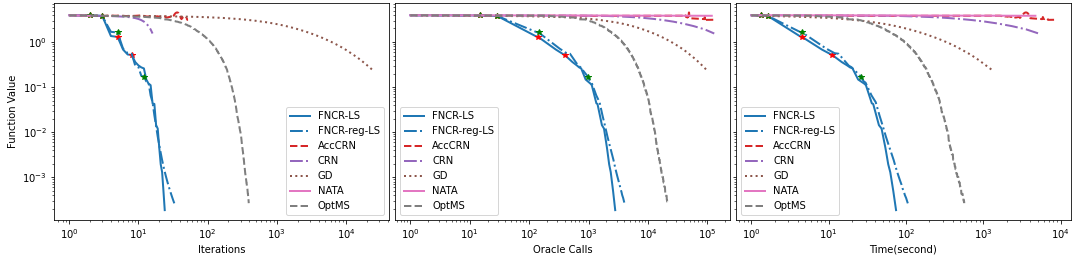}
    \caption{\texttt{FNCR-reg-LS}, \texttt{FNCR-LS}, and \texttt{OptMS} terminate upon satisfying the approximate first-order optimality condition. In contrast, \texttt{CRN}, \texttt{AccCRN}, and \texttt{NATA} terminate due to exceeding the maximum number of oracle calls. For both \texttt{FNCR-reg-LS} and \texttt{FNCR-LS}, four \texttt{INS}-type directions are used in this experiment.}
    \label{fig:dtd_acc}
\end{figure}


A major limitation of \texttt{CRN}, \texttt{AccCRN}, and \texttt{NATA} is their reliance on the nonlinear subproblem~\cref{eq:cubic_subproblem}, which is costly to solve. Each iteration of gradient descent applied to this subproblem requires at least two oracle calls, excluding additional evaluations from the backtracking line-search within gradient descent. For example, even after a preliminary run to estimate $\LH$, \texttt{CRN} typically required around $2{,}000$ iterations per subproblem solve. In the case of \texttt{NATA}, the algorithm reached its oracle call limit of $10^5$ during the first iteration while solving its subproblem, causing it to stop after just one iteration.

Moreover, both \texttt{AccCRN} and \texttt{NATA} include an additional nonlinear auxiliary subproblem. Although solving this auxiliary problem does not increase oracle calls--since it depends only on the gradient--it still adds substantial runtime, as seen in the runtime plot of \cref{fig:dtd_acc}. 

In contrast, both \texttt{FNCR-LS} and \texttt{FNCR-reg-LS} avoid solving such nonlinear subproblems, relying instead on adaptive monitoring of inexact Newton directions. This design makes them significantly more efficient in terms of oracle calls, runtime, and even iteration count, especially in high-dimensional settings where solving cubic or auxiliary subproblems becomes prohibitively expensive.

\section{Conclusion}
In this paper, we presented the \emph{Faithful-Newton} (FN) framework, which enables different instantiations of Newton-type methods by tightly integrating the inner and outer solvers---so that the choice of inner solver directly determines the convergence guarantees of the outer iteration. As a concrete instance, we proposed the Faithful-Newton-Conjugate-Residual with Line-search (\texttt{FNCR-LS}) method. We demonstrated that, under strongly convex and Lipschitz-Hessian conditions, \texttt{FNCR-LS} achieves either global superlinear or condition-number-independent linear convergence, depending on parameter choices. Locally, it attains quadratic convergence even with inexact Newton steps. Moreover, we showed that classical inexact and truncated Newton's methods arise as special cases of \texttt{FNCR-LS}, and that our superlinear convergence results extend naturally to these variants---thereby challenging the longstanding perception that Newton’s method exhibits worse worst-case behavior than gradient methods. We further extended our analysis to general convex functions through a regularized variant, \texttt{FNCR-reg-LS}, and established an iteration complexity matching the best-known rates among non-accelerated methods. Finally, we validated our theoretical results through comprehensive empirical studies, demonstrating the strong practical performance of \texttt{FNCR-LS} and \texttt{FNCR-reg-LS} compared with several alternative second-order methods.

\bibliographystyle{plain}
\bibliography{ref}

\appendix
\section{Deferred Proofs}\label{sec:deferred_proofs}
\begin{proof}[Proof of \cref{lem:descent_eta_region}]
    By \cref{eq:assmp:LH:fkp1<fk}, we have
    \begin{align*}
        f(\f{x}_k + \eta\f{s}_k) & \leq f_k + \eta\lr{\f{g}_k,\f{s}_k} + \frac{\eta^2}{2}\lr{\f{s}_k,\He(\f{x}_k)\f{s}_k} + \frac{\LH\eta^3}{6}\|\f{s}_k\|^3,
    \end{align*}
    where $\f{s}_k$ is a vector generated by \cref{alg:cr_with_sd_si} with $\f{H}_k$ and $\f{g}_k$. Consider first $\f{H}_k = \He(\f{x}_k)$ and, so, $\lr{\f{d},\f{H}_k\f{d}} \geq h\|\f{d}\|^2$. This implies,
    \begin{align*}
         f(\f{x}_k + \eta\f{s}_k) - f_k - \rho\eta\lr{\f{g}_k,\f{s}_k} & \leq (1 - \rho)\eta\lr{\f{g}_k,\f{s}_k} + \frac{\eta^2}{2}\lr{\f{s}_k,\f{H}_k\f{s}_k} + \frac{\LH\eta^3}{6}\|\f{s}_k\|^3\\
        & \hspace{-16mm} \op{\cref{eq:gst<-sHs}}{\leq} -(1 - \rho)\eta|\lr{\f{g}_k,\f{s}_k}| + \frac{\eta^2}{2}|\lr{\f{g}_k,\f{s}_k}| + \frac{\LH\eta^3h^{3/2}}{6}|\lr{\f{g}_k,\f{s}_k}|^{3/2}\\
        & \hspace{-16mm}\op{(\eta \, \leq \, 1)}{\leq} \eta|\lr{\f{g}_k,\f{s}_k}|\left(-(1 - \rho) + \frac{1}{2} + \frac{\LH\eta^2h^{3/2}}{6}|\lr{\f{g}_k,\f{s}_k}|^{1/2}\right).
    \end{align*}
    When the step-size $\eta$ satisfies \cref{eq:lem:descent_eta_region}, the right-hand side of the above inequality becomes non-positive, which implies $\eta\f{s}_k$ is $\rho$-sufficient. 
    
    Now consider the regularized Hessian $\f{H}_k = \He(\f{x}_k) + h\f{I}$, for $h > 0$. We have 
    \begin{align*}
        f(\f{x}_k + \eta\f{s}_k) - f_k - \rho\eta\lr{\f{g}_k,\f{s}_k} & \leq (1 - \rho)\eta\lr{\f{g}_k,\f{s}_k} + \frac{\eta^2}{2}\lr{\f{s}_k,\He(\f{x}_k)\f{s}_k} + \frac{\LH\eta^3}{6}\|\f{s}_k\|^3\\
        & \hspace{-16mm} = (1 - \rho)\eta\lr{\f{g}_k,\f{s}_k} + \frac{\eta^2}{2}\lr{\f{s}_k,\f{H}_k\f{s}_k} - \frac{\eta^2 h}{2}\|\f{s}_k\|^2 + \frac{\LH\eta^3}{6}\|\f{s}_k\|^3\\
        & \hspace{-16mm} \leq (1 - \rho)\eta\lr{\f{g}_k,\f{s}_k} + \frac{\eta^2}{2}\lr{\f{s}_k,\f{H}_k\f{s}_k} + \frac{\LH\eta^3}{6}\|\f{s}_k\|^3.
    \end{align*}
    By the same reasoning as before, this again implies that if the step-size $\eta$ satisfies \cref{eq:lem:descent_eta_region}, $\eta\f{s}_k$ is $\rho$-sufficient.
\end{proof}

\begin{proof}[Proof of \cref{lem:sufficient_inexactness}]
    Suppose $\|\f{s}_k\|^2 > 2q_1\|\f{g}_k\|/\LH$. Since $\f{s}_k$ is $\rho$-sufficient, we have
    \begin{align*}
        f(\f{x}_k + \f{s}_k) - f_k \leq \rho\lr{\f{g}_k,\f{s}_k} \op{\cref{eq:gst<-sHs}}{\leq} - \rho\sigma\sqrt{\|\f{g}_k\|}\|\f{s}_k\|^2 < - \frac{2\rho\sigma q_1}{\LH}\|\f{g}_k\|^{3/2}.
    \end{align*}
    Otherwise, when $2q_1\|\f{g}_k\|/\LH \geq \|\f{s}_k\|^2$, given $q_2\|\f{g}_k\| \geq \|\f{r}_k\|$, we have
    \begin{align*}
        \|\f{g}(\f{x}_k + \f{s}_k)\| & = \|\f{g}(\f{x}_k + \f{s}_k) - \f{g}_k - \f{H}_k\f{s}_k - \f{r}_k\| \\
        &= \|\f{g}(\f{x}_k + \f{s}_k) - \f{g}_k - \He(\f{x}_k)\f{s}_k - \sigma\sqrt{\|\f{g}_k\|}\f{s}_k - \f{r}_k\|\\
        & \leq \|\f{g}(\f{x}_k + \f{s}^{(t)}) - \f{g}_k - \He(\f{x}_k)\f{s}_k\| + \sigma\sqrt{\|\f{g}_k\|}\|\f{s}_k\| + \|\f{r}_k\|\\
        & \op{\cref{eq:assmp:LH:gkp1<p}}{\leq} \frac{\LH}{2}\|\f{s}_k\|^2 + \sigma\sqrt{\|\f{g}_k\|}\|\f{s}_k\| + \|\f{r}_k\| \\
        &\leq \frac{\LH}{2}\|\f{s}_k\|^2 + \sigma\sqrt{\|\f{g}_k\|}\|\f{s}_k\| +  q_2\|\f{g}_k\|\\
        & \leq q_1\|\f{g}_k\| + \sigma \sqrt{\frac{2q_1}{\LH}}\|\f{g}_k\| + q_2\|\f{g}_k\| \leq \left(q_1 + \sigma \sqrt{\frac{2q_1}{\LH}} + q_2\right)\|\f{g}_k\|.
    \end{align*}
    The inequality $f(\f{x}_k + \f{s}_k) < f_k$ follows from the assumption that $\f{s}_k$ is $\rho$-sufficient.
\end{proof}

\begin{proof}[Proof of \cref{lem:obj_descents}]
    Since $\f{s}_k$ is not $\rho$-sufficient, by \textit{modus tollens}, \cref{lem:descent_eta_region} with $h = \sigma\sqrt{\|\f{g}_k\|}$ implies
    \begin{align*}
        \left(\frac{3(1 - 2\rho)}{\LH}\right)^2\sigma^3\|\f{g}_k\|^{3/2} < \left|\lr{\f{g}_k,\f{s}_k}\right|.
    \end{align*}
    Hence, \cref{alg:backtracking} returns $\eta$ such that 
    \begin{align*}
        f(\f{x}_k + \eta\f{s}_k) - f_k & \leq \rho\eta\lr{\f{g}_k,\f{s}_k} \op{\cref{eq:backtracking_eta}}{\leq} - \rho\zeta\sqrt{\frac{3(1 - 2\rho)}{\LH}}\sigma^{3/4}\|\f{g}_k\|^{3/8}\left|\lr{\f{g}_k,\f{s}_k}\right|^{3/4}\\
        & < - \rho\zeta\left(\frac{3(1 - 2\rho)}{\LH}\right)^2\sigma^{3}\|\f{g}_k\|^{3/2}.
    \end{align*}
\end{proof}

\end{document}